\newtheorem{theorem}{Theorem}[section]
\newtheorem{lemma}[theorem]{Lemma}
\newtheorem{remark}[theorem]{Remark}
\newtheorem{corollary}[theorem]{Corollary}
\newtheorem{proposition}[theorem]{Proposition}
\newtheorem{problem}[theorem]{Problem}
\newtheorem{definition}[theorem]{Definition}
\newtheorem{example}[theorem]{Example}
\newenvironment{proof}{\trivlist\item[]\rm{\textbf{Proof.}\ }}{\endtrivlist}
      \def\@setcopyright{}
      \def\serieslogo@{}
\newcommand{\Ima}{\mathrm{Im}}
\newcommand{\Lie}{\ensuremath{\mathsf{Lie}}}
\author{Hesam Safa  \\ }
\title{The Schur multiplier of an $n$-Lie superalgebra}
\begin{document}
\maketitle


\noindent\textbf{Abstract.} In the present  paper, we  study the notion of the Schur multiplier $\mathcal{M}(L)$
of an $n$-Lie superalgebra $L$,   and prove that $\dim \mathcal{M}(L)\leq  \sum_{i=0}^{n} {m\choose{i}}\mathcal{L}(n-i,k)$,
where $\dim L=(m|k)$, $\mathcal{L}(0,k)=1$ and $\mathcal{L}(t,k)=\sum_{j=1}^{t}{{t-1}\choose{j-1}}{k\choose j}$, for $1\leq t\leq n$.
 Moreover, we obtain an upper bound for
the dimension of  $\mathcal{M}(L)$ in which $L$ is a nilpotent $n$-Lie superalgebra with one-dimensional derived superalgebra.
It is also provided several inequalities on $\dim\mathcal{M}(L)$ as well as an $n$-Lie superalgebra analogue of the converse of Schur's
theorem. \\

\textbf{2010 MSC:} 17B30, 17B55.

\textbf{Key words:} $n$-Lie superalgebra, Schur multiplier, Schur's theorem.
\section{Introduction}
 {\it Lie superalgebras} first appeared in a joint work of  F. A. Berezin and G. I. Kac (also written G. I. Kats) in 1970, as  algebras generated by even
(commuting) and odd (anticommuting) variables
\cite{ber},  though it was not them who introduced the name. A year earlier, Victor G. Kac  had started
working on Lie superalgebras after his meeting and discussion with a physicist named  Stavraki who worked on  algebras of field operators \cite{sta2}.
Stavraki showed Kac his example given in \cite{sta}, which was actually the {\it simple Lie superalgebra} $sl(2|1)$.
In 1971,  Kac \cite{kac} published his  first results on  Lie superalgebras or as Stavraki would have liked to call them
{\it graded Lie algebras} \cite{kac2}. Since then, many mathematicians and physicists began working on Lie superalgebras
such that this concept  became a topic of interest in algebra and mathematical physics.

Also, the notion of {\it $n$-Lie superalgebras} (also known as {\it Filippov superalgebras}) was introduced by Daletskii and Kushnirevich \cite{dal} in 1996,
as a natural generalization of  $n$-Lie algebras   \cite{fil}.

The concept of the {\it  Schur multiplier} of a group  originated from a work by Schur \cite{s} on projective representations in
1904. Nearly a century later, Stitzinger and his PhD students Batten and Moneyhun  introduced the Schur multiplier of a Lie algebra in \cite{bat,b-m-s,m}.
Recently, Nayak \cite{n0,n1}  has generalized this notion to Lie superalgebras.
She gives some results on the structure of the Schur multiplier of a Lie superalgebra $L$, denoted by $\mathcal{M}(L)$, and
provides several upper bounds for the dimension of $\mathcal{M}(L)$, as well.

In 1994, Moneyhun \cite{m} proved that $\dim \mathcal{M}(L)\leq\frac{1}{2}m(m-1)$, in which $L$ is  an $m$-dimensional Lie algebra.
In the context of $n$-Lie algebras \cite{d-s}, this bound is  $\dim \mathcal{M}(L)\leq{m\choose n}$. In \cite{n1},
 Nayak shows that  if $L$ is a Lie superalgebra of dimension $(m|k)$, then
  $\dim \mathcal{M}(L)\leq \frac{1}{2}\big{(}(m+k)^2 + (k-m)\big{)}$ (see also \cite{liu}).\\

In this article, we study the notion of the Schur multiplier
of a finite dimensional $n$-Lie superalgebra and obtain an upper bound for its dimension,
which widely extends  all above  bounds.
Moreover, we show that if $L$ is a
 nilpotent $n$-Lie superalgebra
 with $\dim L^2=(1|0)$, then
\begin{eqnarray*}
\dim\mathcal{M}(L)&\leq& \sum_{i=0}^{n} \Bigg{[}{p-1\choose{i}}\mathcal{L}(n-i,q)
+ {m-p+1\choose{i}}\mathcal{L}(n-i,k-q)\Bigg{]}\\
&+&  \sum_{i=1}^{n-1} \Bigg{[}\sum_{j=0}^{i}{p-1\choose j}q^{i-j}\  \sum_{j=0}^{n-i}{m-p\choose j}(k-q)^{n-i-j}\Bigg{]}-1,
\end{eqnarray*}
where   $\dim L=(m|k)$, $\dim Z(L)=(p|q)$ and $\mathcal{L}(t,k)$ is the function defined above.
This upper bound simultaneously  generalizes the corresponding  bounds in Lie algebras \cite{n-r}, $n$-Lie algebras \cite{esh} and Lie superalgebras \cite{n1}, as well.
We also discuss some inequalities on the dimension of $\mathcal{M}(L)$ as well as  a result on  the converse of Schur's theorem
in $n$-Lie superalgebras.


\section{Terminology and notation}

Throughout this paper, all (super)algebras are considered over a fixed field $\mathbb{F}$ of characteristic zero.
In this section, we list some terminologies  on $n$-Lie superalgebras  from \cite{guan,k,sun}.

Let $\mathbb{Z}_2=\{0,1\}$ be a field. A $\mathbb{Z}_2$-graded vector space  (or superspace) $V$ is
a direct sum of vector spaces $V_0$ and $V_1$, whose elements are called even and odd, respectively.
Non-zero elements of $V_0\cup V_1$ are said to be homogeneous. For a homogeneous element $v\in V_a$ with
$a\in\mathbb{Z}_2$,  $|v|=a$ is the
degree of $v$. In the sequel,  when the notation $|v|$ appears, it means that $v$ is a
homogeneous element.
A vector subspace $U$ of $V$ is called $\mathbb{Z}_2$-graded vector subspace (or sub-superspace), if $U=U_0\oplus U_1$ where
$U_0=U\cap V_0$ and $U_1=U\cap V_1$.

\begin{definition}\label{def0} \normalfont
A $\mathbb{Z}_2$-graded vector space $L=L_0\oplus L_1$ is said to be an $n$-Lie superalgebra,
if there exists an $n$-linear map $[-,\ldots,-]:L\times\cdots\times L \longrightarrow  L$
such that
\begin{itemize}
\item[$(i)$] $|[x_1, \ldots,x_n]|=\sum_{i=1}^{n}|x_i|$  (modulo 2),
\item[$(ii)$] $[x_1, \ldots,x_i,x_{i+1},\ldots,x_n]=-(-1)^{|x_i||x_{i+1}|} [x_1, \ldots,x_{i+1},x_i,\ldots,x_n]$  (graded  antisymmetric property),
and
\item[$(iii)$] the following graded Filippov-Jacobi identity holds:
\begin{eqnarray*}
&&[x_1,\ldots,x_{n-1},[y_1,\ldots,y_n]]=[[x_1,\ldots,x_{n-1},y_1],y_2,\ldots,y_n] \\
&&+\sum_{i=2}^{n} (-1)^{(\sum_{j=1}^{i-1}|y_j|) (|x_1|+\cdots+|x_{n-1}|)} [y_1,\ldots,y_{i-1},[x_1,\ldots,x_{n-1},y_i],y_{i+1},\ldots,y_n].
\end{eqnarray*}
\end{itemize}
\end{definition}

Note that this is actually the definition of  an $n$-Lie superalgebra of zero parity (see \cite{guan}, for  general case).
Clearly  $n$-Lie algebras and Lie superalgebras are particular cases of $n$-Lie superalgebras.
In fact, the even part $L_0$ of an $n$-Lie superalgebra $L$ is actually an $n$-Lie algebra,
   which means that if  $L_1=0$, then $L$ becomes an $n$-Lie algebra.

A sub-superspace $I$ of an $n$-Lie superalgebra $L$ is said to be a  sub-superalgebra (resp.  graded ideal), if $[I,I,\ldots,I]\subseteq I$
(resp. $[I,L,\ldots,L]\subseteq I$).
Also, the  center and   commutator (or  derived superalgebra) of  $L$ are
$Z(L)=\{z\in L|\ [z,L,\ldots,L]=0\}$ and
$L^2=[L,\ldots,L]=\langle [x_1,\ldots,x_n]|\ x_i\in L\rangle,$
 repectively, which are  graded ideals of $L$.
 An $n$-Lie superalgebra $L$ is said to be  nilpotent of class $c$, if $L^{c+1}=0$ and $L^c\not=0$, where $L^1=L$ and
  $L^{i+1}=[L^i,L,\ldots,L]$, $i\geq 1$.

Let $L$ and $K$ be two $n$-Lie superalgebras. A multilinear map $f:L\to K$ is called a  homomorphism of $n$-Lie superalgebras, if
$f(L_a)\subseteq K_a$ for every $a\in\mathbb{Z}_2$, and $f([x_1,\ldots,x_n])=[f(x_1),\ldots,f(x_n)]$, for every $x_i\in L$
(see \cite{n1,na} for more details).

Let $0\to R\to F\to L\to 0$ be a  free presentation of an $n$-Lie superalgebra $L$.
 We define the  Schur multiplier of $L$ as
\[\mathcal{M}(L)=\dfrac{R\cap F^2}{[R,F,\ldots,F]},\]
which is an abelian $n$-Lie superalgebra,  independent of the choice of the
free presentation of $L$.
Clearly if $n = 2$, then this definition coincides with the notion
of the  Schur multiplier of a Lie superalgebra given in \cite{n1}.

An exact sequence $0\to M\to K\to L\to 0$ of $n$-Lie superalgebras is called
a  central extension (resp.  stem extension) of $L$, if $M\subseteq Z(K)$ (resp. $M\subseteq Z(K)\cap K^2$).
A   stem cover is a stem extension in which $M\cong \mathcal{M}(L)$. In this case, $K$ is said to be a cover of $L$.

Finally, throughout this paper when an  $n$-Lie superalgebra $L=L_0\oplus L_1$ is of dimension $m+k$,  in which $\dim L_0=m$ and
$\dim L_1=k$, we write $\dim L=(m|k)$.


\section{Preliminary and generalization}

In this section, we provide  some preliminary results on $n$-Lie superalgebras whose proofs are  similar to the corresponding results on Lie (super)algebras
given in \cite{n0,n1}. Hence we omit the proofs.

\begin{lemma}\label{lem21}
Let $0\rightarrow R \rightarrow F \stackrel{\pi}{\rightarrow} L\rightarrow 0$
be a free presentation of an $n$-Lie superalgebra $L$ and $0\rightarrow M \rightarrow K \stackrel{\theta}{\rightarrow} \bar{L}\rightarrow 0$
be a central extension of another $n$-Lie superalgebra $\bar{L}$. Then for every homomorphism $\alpha:L\to\bar{L}$,
there exists a homomorphism $\beta:F/[R,F,\ldots,F]\longrightarrow K$ such that $\beta(R/[R,F,\ldots,F])\subseteq M$
and the following diagram is commutative:
\begin{equation*}
\xymatrix{0\ar[r]&\frac{R}{[R,F,\ldots,F]} \ar[r]\ar[d]_{\beta|}&\frac{R}{[R,F,\ldots,F]}\ar[r]^{\ \ \ \bar{\pi}}\ar[d]_{\beta}& L\ar[r]\ar[d]_{\alpha}&0\\
 0\ar[r]&M\ar[r]& K\ar[r]^{\theta}& \bar{L}\ar[r]&0,}
\end{equation*}
\end{lemma}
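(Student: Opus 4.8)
The plan is to exploit the universal (freeness) property of $F$ to lift the composite $\alpha\circ\pi$ to a homomorphism into $K$, and then to show that this lift descends to the quotient $F/[R,F,\ldots,F]$ precisely because $M$ is central in $K$. This is the standard two-stage pattern (lift, then factor) familiar from the group and Lie (super)algebra settings; the only new bookkeeping is the $\mathbb{Z}_2$-grading and the $n$-ary bracket.

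First I would fix a homogeneous free generating set $\{f_i\}$ of the free $n$-Lie superalgebra $F$. Since $\theta$ is a surjective homomorphism of $n$-Lie superalgebras, it preserves the grading, so for each generator $f_i$ I can choose a homogeneous element $k_i\in K$ with $|k_i|=|f_i|$ and $\theta(k_i)=\alpha(\pi(f_i))$; here I use that $\alpha(\pi(f_i))$ is homogeneous of degree $|f_i|$ and that $\theta$ restricts to a surjection $K_a\twoheadrightarrow\bar{L}_a$ for each $a\in\mathbb{Z}_2$. By the universal property of the free object, the assignment $f_i\mapsto k_i$ extends uniquely to a homomorphism $\gamma:F\to K$ of $n$-Lie superalgebras. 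Because $\theta\circ\gamma$ and $\alpha\circ\pi$ are homomorphisms agreeing on the generators $f_i$, they coincide on all of $F$; in particular, for $r\in R=\ker\pi$ we obtain $\theta(\gamma(r))=\alpha(\pi(r))=0$, so $\gamma(R)\subseteq\ker\theta=M$.

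The crucial step is to check that $\gamma$ annihilates $[R,F,\ldots,F]$. For $r\in R$ and $x_2,\ldots,x_n\in F$ we have $\gamma([r,x_2,\ldots,x_n])=[\gamma(r),\gamma(x_2),\ldots,\gamma(x_n)]$, and since $\gamma(r)\in M\subseteq Z(K)$ this bracket vanishes. Hence $\gamma$ factors through the quotient, inducing a homomorphism $\beta:F/[R,F,\ldots,F]\to K$ with $\beta(R/[R,F,\ldots,F])=\gamma(R)\subseteq M$; restricting yields the left-hand vertical map $\beta|$. Commutativity of the right-hand square, $\theta\circ\beta=\alpha\circ\bar{\pi}$, is then exactly the identity $\theta\circ\gamma=\alpha\circ\pi$ read modulo $[R,F,\ldots,F]$, and the left-hand square commutes by construction. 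I expect the only delicate points to be bookkeeping: confirming that the lift can be taken degree-preserving (so that $\gamma$ is genuinely a morphism of superalgebras), and isolating the role of centrality of $M$ as the reason the bracket dies — this last observation is the heart of the argument, and everything else is routine verification.
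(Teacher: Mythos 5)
Your proof is correct and follows exactly the argument the paper intends: the paper's own proof is just a citation (``direct generalization of \cite[Lemma 3.2]{n0}''), and that argument is precisely your lift-then-factor pattern --- use freeness of $F$ to lift $\alpha\circ\pi$ through $\theta$ to a homomorphism $\gamma:F\to K$, note $\gamma(R)\subseteq\ker\theta=M$, and kill $[R,F,\ldots,F]$ via centrality of $M$ so that $\gamma$ descends to $\beta$. Your extra care in choosing homogeneous, degree-preserving lifts (using that $\theta$ surjects on each graded component) is exactly the bookkeeping needed to carry the $n=2$ superalgebra proof over to the $n$-ary graded setting.
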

\begin{proof}
The proof is a direct generalization of \cite[Lemma 3.2]{n0}.
\end{proof}

\begin{theorem}\label{th22}
Let $L$ be an $n$-Lie superalgebra whose Schur multiplier is finite dimensional, and
$0\rightarrow R \rightarrow F \rightarrow L\rightarrow 0$ be a free presentation of $L$.
Then the extension $0\rightarrow M \rightarrow K \rightarrow L\rightarrow 0$ is a stem cover of $L$ if and only if there exists a graded
ideal $S$ in $F$ such that
\begin{itemize}
\item[$(i)$] $K\cong F/S$ and $M\cong R/S$,
\item[$(ii)$] $\frac{R}{[R,F,\ldots,F]}= \mathcal{M}(L)\oplus\frac{S}{[R,F,\ldots,F]}$.
\end{itemize}
\end{theorem}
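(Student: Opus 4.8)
The plan is to treat this as the $n$-Lie superalgebra analogue of the classical presentation-level characterization of stem covers, proving the two implications separately; the only genuinely new ingredients beyond the Lie-algebra argument are bookkeeping for the $\mathbb{Z}_2$-grading and the $n$-ary bracket, both of which are formal and can be checked componentwise.

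For the forward direction, suppose the extension $0\to M\to K\stackrel{\theta}{\to}L\to 0$ is a stem cover, and let $\pi\colon F\to L$ be the presentation map. Since $F$ is free, I would first lift the identity of $L$ along $\theta$ — equivalently, apply Lemma~\ref{lem21} with $\alpha=\mathrm{id}_L$ — to obtain a homomorphism $\sigma\colon F\to K$ with $\theta\sigma=\pi$, and set $S=\ker\sigma$. A quick diagram chase gives $S\subseteq R$, since $\pi(s)=\theta\sigma(s)=0$ for $s\in S$. The key point is that $\sigma$ is surjective: as $\theta\sigma=\pi$ is onto, $K=\sigma(F)+M$, and because $M\subseteq Z(K)$ kills every bracket involving $M$, expanding multilinearly gives $K^2=\sigma(F^2)$; then $M\subseteq K^2=\sigma(F^2)\subseteq\sigma(F)$ forces $K=\sigma(F)+M=\sigma(F)$. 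Once $\sigma$ is onto one checks $\sigma(R)=M$, whence $K\cong F/S$ and $M\cong R/(R\cap S)=R/S$, which is part~$(i)$.

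For part~$(ii)$, write $W=[R,F,\ldots,F]$ and $A=R\cap F^2$, so that $\mathcal{M}(L)=A/W$. Centrality of $M$ forces $\sigma(W)=[M,K,\ldots,K]=0$, hence $W\subseteq S$ and the quotient $S/W$ makes sense inside $R/W$. The inclusion $M\subseteq K^2=\sigma(F^2)$ translates into $R\subseteq F^2+S$, and intersecting with $R$ yields $R=A+S$; this is precisely surjectivity of the sum in $(ii)$. It remains to see that the sum is direct, i.e. $A\cap S=W$, and this is where finiteness enters: the chain of isomorphisms $R/S\cong M\cong\mathcal{M}(L)=A/W$ together with $R/S=(A+S)/S\cong A/(A\cap S)$ gives $\dim\bigl(A/(A\cap S)\bigr)=\dim(A/W)<\infty$, and since $W\subseteq A\cap S$ this collapses to $A\cap S=W$, proving $(ii)$.

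The converse is a direct unwinding of the hypotheses. Given a graded ideal $S$ satisfying $(i)$ and $(ii)$, the very meaning of $(ii)$ is that $W\subseteq S\subseteq R$ and $R/W=(A/W)\oplus(S/W)$. Put $K=F/S$ and $M=R/S$; then $\pi$ descends to a surjection $K\to L$ with kernel $M$, giving the extension. Now $[M,K,\ldots,K]$ is the image of $W$ in $F/S$, which vanishes since $W\subseteq S$, so $M\subseteq Z(K)$; the decomposition yields $R=A+S\subseteq F^2+S$, so $M\subseteq K^2$; and the direct sum gives $M=R/S\cong(R/W)/(S/W)\cong A/W=\mathcal{M}(L)$. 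Hence the extension is a stem cover. I expect the main obstacle to be the two places where the stem condition $M\subseteq K^2$ is converted into the relation $R=A+S$ and where finite-dimensionality of $\mathcal{M}(L)$ is invoked to force $A\cap S=W$; verifying that $\sigma$, $S$ and all the subspaces above are $\mathbb{Z}_2$-graded and closed under the bracket is routine.
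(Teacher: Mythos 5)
Your proposal is correct and takes essentially the same route as the paper, whose proof is by citation the direct generalization of Nayak's Theorem 3.3 in \cite{n0}: lift the presentation map through the stem cover using freeness (as in Lemma \ref{lem21}), set $S=\ker\sigma$, use centrality of $M$ and the stem condition $M\subseteq K^2$ to obtain $W=[R,F,\ldots,F]\subseteq S$ and $R=(R\cap F^2)+S$ via the modular law, and invoke finite-dimensionality of $\mathcal{M}(L)$ in the dimension count $\dim R/S=\dim\bigl((R\cap F^2)/W\bigr)$ to force $(R\cap F^2)\cap S=W$. Your converse likewise matches the standard unwinding, so no gaps beyond the routine grading checks you already flagged.
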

\begin{proof}
Direct generalization of  \cite[Theorem 3.3]{n0}.
\end{proof}

\begin{corollary}\label{coro23}
Any finite dimensional $n$-Lie superalgebra has at least one cover.
\end{corollary}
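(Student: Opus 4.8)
The plan is to deduce this directly from \cref{th22} by exhibiting a suitable graded ideal of a free presentation. First I would fix a free presentation $0\to R\to F\xrightarrow{\pi} L\to 0$ of the given finite dimensional $n$-Lie superalgebra $L$. Since $L$ is finite dimensional, its Schur multiplier $\mathcal{M}(L)=\frac{R\cap F^2}{[R,F,\ldots,F]}$ is finite dimensional (this is guaranteed by the upper bound established for $\dim\mathcal{M}(L)$ in the paper), so the hypothesis of \cref{th22} is satisfied and it suffices to construct a graded ideal $S$ of $F$ for which conditions $(i)$ and $(ii)$ of that theorem hold.

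Next I would pass to $\bar F:=F/[R,F,\ldots,F]$ and work inside the superspace $A:=\frac{R}{[R,F,\ldots,F]}=\bar R$. The image $A$ is central in $\bar F$, because $[\bar R,\bar F,\ldots,\bar F]$ is the image of $[R,F,\ldots,F]$ and hence vanishes; in particular $\mathcal{M}(L)=\frac{R\cap F^2}{[R,F,\ldots,F]}$ is a sub-superspace of $A$, since $[R,F,\ldots,F]\subseteq R\cap F^2\subseteq R$. Working over the field $\mathbb{F}$, I would choose a $\mathbb{Z}_2$-graded vector space complement of $\mathcal{M}(L)$ in $A$, constructed degree by degree in the even and odd components, say $A=\mathcal{M}(L)\oplus B$. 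Letting $S$ be the preimage of $B$ in $F$, so that $B=\frac{S}{[R,F,\ldots,F]}$ with $[R,F,\ldots,F]\subseteq S\subseteq R$, I obtain a graded sub-superspace $S$ of $F$; moreover $[S,F,\ldots,F]\subseteq[R,F,\ldots,F]\subseteq S$, so $S$ is in fact a graded ideal of $F$.

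Finally I would set $K=F/S$ and $M=R/S$ and verify the two conditions of \cref{th22}. Condition $(i)$ holds by the very definitions of $K$ and $M$, while condition $(ii)$, namely $\frac{R}{[R,F,\ldots,F]}=\mathcal{M}(L)\oplus\frac{S}{[R,F,\ldots,F]}$, is precisely the graded decomposition $A=\mathcal{M}(L)\oplus B$ produced above. Hence \cref{th22} yields that $0\to M\to K\to L\to 0$ is a stem cover of $L$, so $K$ is a cover of $L$ and the existence claim follows.

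The main obstacle I anticipate is not the bookkeeping but ensuring that the chosen complement $B$ respects the $\mathbb{Z}_2$-grading and that its preimage $S$ is genuinely a graded ideal rather than merely a graded subspace. The grading is handled by splitting over $\mathbb{F}$ in each homogeneous degree, and the ideal property reduces to the containment $[R,F,\ldots,F]\subseteq S$, which holds by construction; conceptually it is the centrality of $A$ in $\bar F$ that makes every such $S$ automatically an ideal. A secondary point worth stating carefully is the finite dimensionality of $\mathcal{M}(L)$, which is needed to invoke \cref{th22} and which is supplied by the dimension bound of the paper.
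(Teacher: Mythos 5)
Your construction is precisely the standard argument that the paper compresses into its one-line proof (``It follows from the above theorem''): choose a graded complement $B$ of $\mathcal{M}(L)$ in $R/[R,F,\ldots,F]$, let $S$ be its preimage, note $S$ is a graded ideal because $[S,F,\ldots,F]\subseteq[R,F,\ldots,F]\subseteq S$, and apply \cref{th22} with $K=F/S$ and $M=R/S$; this is correct and is the same approach as the paper's. The one point to repair is your justification of the finite dimensionality of $\mathcal{M}(L)$: the upper bound of \cref{th3} cannot be invoked here, since its proof begins by taking a stem cover of $L$ and thus relies on the present corollary; instead cite \cref{coro27}(i) (a consequence of the exact sequences of \cref{prop26}, which do not use covers), or argue directly by applying \cref{th1} to $F/[R,F,\ldots,F]$, whose central factor has dimension at most $\dim L=(m|k)$, so that $\mathcal{M}(L)\subseteq F^2/[R,F,\ldots,F]$ is finite dimensional.
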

\begin{proof}
It follows from the above theorem.
\end{proof}

\begin{proposition}\label{prop24}
Let $0\rightarrow M \rightarrow K \rightarrow L\rightarrow 0$ be a stem extension of a finite dimensional $n$-Lie
superalgebra $L$. Then $K$ is a homomorphic image of a cover of $L$.
\end{proposition}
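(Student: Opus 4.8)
The plan is to realize $K$ as a quotient of a cover built from a fixed free presentation, using Lemma \ref{lem21} to produce a comparison map and Theorem \ref{th22} to produce the cover. First I would fix a free presentation $0\to R\to F\xrightarrow{\pi} L\to 0$ and set $F^\ast=F/[R,F,\ldots,F]$ and $R^\ast=R/[R,F,\ldots,F]$, so that $0\to R^\ast\to F^\ast\xrightarrow{\bar\pi} L\to 0$ is a central extension with $\mathcal{M}(L)=R^\ast\cap (F^\ast)^2$. Since a stem extension is in particular central ($M\subseteq Z(K)\cap K^2\subseteq Z(K)$), I would apply Lemma \ref{lem21} to this free presentation, the given extension $0\to M\to K\xrightarrow{\theta} L\to 0$, and $\alpha=\mathrm{id}_L$, obtaining a homomorphism $\beta:F^\ast\to K$ with $\beta(R^\ast)\subseteq M$ and $\theta\beta=\bar\pi$.

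Next I would check that $\beta$ is surjective. From $\theta\beta=\bar\pi$ and the surjectivity of $\bar\pi$ one gets $K=\mathrm{Im}\,\beta+M$; since $M$ is central, every $n$-fold bracket of elements of $K$ may be computed using representatives in $\mathrm{Im}\,\beta$ alone, so $K^2=(\mathrm{Im}\,\beta)^2=\beta((F^\ast)^2)\subseteq\mathrm{Im}\,\beta$. Because the extension is stem, $M\subseteq K^2\subseteq\mathrm{Im}\,\beta$, whence $K=\mathrm{Im}\,\beta$. The same computation shows that every element of $M$ is the image of an element of $(F^\ast)^2$; combined with $\ker\bar\pi=R^\ast$ and $\theta\beta=\bar\pi$ (which force $\beta^{-1}(M)=R^\ast$, hence $\ker\beta\subseteq R^\ast$), this yields $\beta(\mathcal{M}(L))=M$, and therefore the graded identity $R^\ast=\mathcal{M}(L)+\ker\beta$.

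The key step is then to choose the complement correctly. Since $R^\ast$ is central in $F^\ast$, every graded subspace of $R^\ast$ is automatically a graded ideal of $F^\ast$. Working degree by degree, I would select a graded subspace $S^\ast\subseteq\ker\beta$ with $\ker\beta=(\mathcal{M}(L)\cap\ker\beta)\oplus S^\ast$; from $R^\ast=\mathcal{M}(L)+\ker\beta$ this gives $R^\ast=\mathcal{M}(L)\oplus S^\ast$ with $S^\ast\subseteq\ker\beta$. Letting $S$ be the preimage of $S^\ast$ in $F$, Theorem \ref{th22} guarantees that $K^\ast:=F/S\cong F^\ast/S^\ast$ is a cover of $L$.

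Finally, because $S^\ast\subseteq\ker\beta$, the map $\beta$ factors through $F^\ast/S^\ast=K^\ast$, producing a homomorphism $\bar\beta:K^\ast\to K$ which is surjective since $\beta$ is; thus $K$ is a homomorphic image of the cover $K^\ast$. I expect the main obstacle to be the bookkeeping around gradings—ensuring that $\ker\beta$ and $\mathcal{M}(L)$ are $\mathbb{Z}_2$-graded and that the complement $S^\ast$ can be chosen graded inside $\ker\beta$—together with the verification that $\beta(\mathcal{M}(L))=M$, which is precisely what makes the decomposition $R^\ast=\mathcal{M}(L)\oplus S^\ast$ with $S^\ast\subseteq\ker\beta$ available.
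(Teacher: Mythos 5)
Your proof is correct and takes essentially the same approach as the paper, whose proof of Proposition \ref{prop24} consists precisely of invoking Lemma \ref{lem21} and Theorem \ref{th22}; your argument is a careful expansion of exactly that outline (the comparison map $\beta$ with $\theta\beta=\bar\pi$, the verification that $\beta$ is surjective and $\beta(\mathcal{M}(L))=M$, the graded decomposition $R^\ast=\mathcal{M}(L)\oplus S^\ast$ with $S^\ast\subseteq\ker\beta$, and factoring through the cover $F/S$). The only detail worth adding is that the hypothesis of Theorem \ref{th22}, namely finite-dimensionality of $\mathcal{M}(L)$, is available here via Corollary \ref{coro27}(i) because $L$ is finite dimensional.
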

\begin{proof}
It is a consequence of Lemma \ref{lem21} and Theorem \ref{th22}. One may also see  \cite[Theorem 3.6]{n0}, for case $n=2$.
\end{proof}

\begin{lemma}\label{lem25}
Let $L$ be a (finite dimensional) $n$-Lie superalgebra and $N$ be a graded ideal of $L$.
Then there exists a (finite dimensional) $n$-Lie superalgebra $K$ with a graded ideal $M$ such that
\begin{itemize}
\item[$(i)$] $L^2\cap N\cong K/M$,
\item[$(ii)$] $M\cong \mathcal{M}(L)$,
\item[$(iii)$] $\mathcal{M}(L/N)$ is a homomorphic image of $K$,
\item[$(iv)$] if $N\subseteq Z(L)$, then $L^2\cap N$ is a homomorphic image of $\mathcal{M}(L/N)$.
\end{itemize}
\end{lemma}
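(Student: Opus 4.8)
The plan is to fix a free presentation $0\rightarrow R\rightarrow F\stackrel{\pi}{\rightarrow}L\rightarrow 0$ of $L$ and to realize every object in the statement as a subquotient of $F$ sitting in the chain $[R,F,\ldots,F]\subseteq R\cap F^2\subseteq S\cap F^2\subseteq F^2$. First I would set $S=\pi^{-1}(N)$. Since $\pi$ is a homomorphism of $n$-Lie superalgebras and $N$ is a graded ideal, $S$ is a graded ideal of $F$ with $R\subseteq S$, and the isomorphism theorems give $N\cong S/R$ together with $L/N\cong F/S$. The latter is itself a free presentation of $L/N$, so that by the definition of the Schur multiplier in Section 2,
\[
\mathcal{M}(L)=\frac{R\cap F^2}{[R,F,\ldots,F]},\qquad
\mathcal{M}(L/N)=\frac{S\cap F^2}{[S,F,\ldots,F]}.
\]

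Next I would propose the candidates
\[
K=\frac{S\cap F^2}{[R,F,\ldots,F]},\qquad M=\frac{R\cap F^2}{[R,F,\ldots,F]}.
\]
Because $R\subseteq S$ one has $R\cap F^2\subseteq S\cap F^2$, and since $[R,F,\ldots,F]\subseteq R\cap F^2$ (the bracket lands in $R$ as $R$ is a graded ideal, and in $F^2$ by definition), $M$ is a well-defined graded ideal of $K$ with $M\cong\mathcal{M}(L)$, which is $(ii)$. For $(i)$ I would compute $K/M=(S\cap F^2)/(R\cap F^2)$ and identify it with $L^2\cap N$: writing $L^2=(F^2+R)/R$ and $N=S/R$ inside $L=F/R$, the Dedekind modular law (applicable since $R\subseteq S$) gives $(F^2+R)\cap S=(F^2\cap S)+R$, whence $L^2\cap N\cong (S\cap F^2)/(R\cap F^2)=K/M$. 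For $(iii)$ the inclusion $[R,F,\ldots,F]\subseteq[S,F,\ldots,F]$ furnishes the canonical surjection $K\twoheadrightarrow\mathcal{M}(L/N)$. Finally, for $(iv)$, the hypothesis $N\subseteq Z(L)$ says exactly that $[S,F,\ldots,F]\subseteq R$, hence $[S,F,\ldots,F]\subseteq R\cap F^2$, so that the quotient map $\mathcal{M}(L/N)=(S\cap F^2)/[S,F,\ldots,F]\twoheadrightarrow (S\cap F^2)/(R\cap F^2)\cong L^2\cap N$ exhibits $L^2\cap N$ as a homomorphic image of $\mathcal{M}(L/N)$.

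The chain of isomorphism-theorem identifications above is routine and transfers verbatim from the Lie (super)algebra arguments of \cite{n0,n1}. The one place demanding genuine care, and the main obstacle, is checking that every step respects the $\mathbb{Z}_2$-grading and the $n$-ary bracket rather than merely the underlying vector spaces: that $S=\pi^{-1}(N)$ is a sub-superspace, that each of $[R,F,\ldots,F]$, $R\cap F^2$ and $S\cap F^2$ is graded so that the quotients inherit a well-defined $n$-Lie superalgebra structure with the correct parity decomposition, and that the connecting maps are homomorphisms of $n$-Lie superalgebras. Once one confirms that the induced brackets are graded antisymmetric and satisfy the graded Filippov-Jacobi identity, the four assertions follow. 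As for the parenthetical finiteness claim, $K$ is finite-dimensional whenever $L$ is, since it is an extension of the finite-dimensional $L^2\cap N$ by $M\cong\mathcal{M}(L)$, the finite-dimensionality of the latter for finite-dimensional $L$ being precisely the phenomenon quantified by the main bound of this paper.
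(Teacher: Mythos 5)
Your proposal is correct and is essentially the paper's intended argument: the paper leaves this lemma as ``straightforward,'' citing the Lie superalgebra case (Nayak's Lemma 3.5), and your construction $K=(S\cap F^2)/[R,F,\ldots,F]$, $M=(R\cap F^2)/[R,F,\ldots,F]$ with $S=\pi^{-1}(N)$ is exactly that standard free-presentation proof, with all four parts verified correctly (including the modular-law identification $K/M\cong L^2\cap N$ and the translation of $N\subseteq Z(L)$ into $[S,F,\ldots,F]\subseteq R\cap F^2$). Your finiteness argument is also sound and non-circular, since the finite-dimensionality of $\mathcal{M}(L)$ (Corollary \ref{coro27}(i) or Theorem \ref{th3}) is established independently of this lemma.
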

\begin{proof}
Straightforward. See also  \cite[Lemma 3.5]{n1}, for  the usual case.
\end{proof}

\begin{proposition}\label{prop26}
Let $0 \to R\to F  \to L\to 0$ be a free presentation of an $n$-Lie superalgebra $L$. Also,
let $N$ be a graded ideal of $L$ and $S$  a graded ideal of $F$ such that $N\cong S/R$. Then the following sequences are exact:
\begin{itemize}
\item[$(i)$] $0 \longrightarrow \frac{R\cap [S,F,\ldots,F]}{[R,F,\ldots,F]}\longrightarrow  \mathcal{M}(L) \longrightarrow
\mathcal{M}(L/N) \longrightarrow\frac{N \cap L^2}{[N,L,\ldots,L]} \longrightarrow 0$,
\item[$(ii)$] $\mathcal{M}(L) \longrightarrow \mathcal{M}(L/N) \longrightarrow \frac{N \cap L^2}{[N,L,\ldots,L]}
\longrightarrow\frac{N}{[N,L,\ldots,L]}\longrightarrow \frac{L}{L^2}\longrightarrow \frac{L}{N+L^2}\longrightarrow 0$,
\item[$(iii)$] $N\otimes^{n-1}\frac{L}{L^2}\longrightarrow  \mathcal{M}(L) \longrightarrow \mathcal{M}(L/N) \longrightarrow
 N \cap L^2  \longrightarrow 0$, provided that $N$ is a central graded ideal of  $L$.
\end{itemize}
\end{proposition}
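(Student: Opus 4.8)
The plan is to run all three sequences off a single free presentation. Since $N\cong S/R$ forces $R\subseteq S$ and $F/S\cong (F/R)/(S/R)\cong L/N$, the sequence $0\to S\to F\to L/N\to 0$ is itself a free presentation of $L/N$; hence, writing $\pi\colon F\to L=F/R$ for the canonical map, I may compute $\mathcal{M}(L)=(R\cap F^2)/[R,F,\ldots,F]$ and $\mathcal{M}(L/N)=(S\cap F^2)/[S,F,\ldots,F]$ simultaneously. First I would record the three identities $\pi(F^2)=L^2$, $\pi(S)=N$ and $\pi([S,F,\ldots,F])=[N,L,\ldots,L]$, together with the inclusions $[R,F,\ldots,F]\subseteq[S,F,\ldots,F]\subseteq F^2$; these are the only structural facts the whole argument rests on.

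For $(i)$ I would take the first nontrivial arrow $\mathcal{M}(L)\to\mathcal{M}(L/N)$ to be induced by the inclusion $R\cap F^2\subseteq S\cap F^2$ (well defined because $[R,F,\ldots,F]\subseteq[S,F,\ldots,F]$), and the second arrow $\mathcal{M}(L/N)\to (N\cap L^2)/[N,L,\ldots,L]$ to be induced by $\pi$. Exactness is then pure element-chasing. The kernel of the first arrow is $\big((R\cap F^2)\cap[S,F,\ldots,F]\big)/[R,F,\ldots,F]$, which equals $(R\cap[S,F,\ldots,F])/[R,F,\ldots,F]$ since $[S,F,\ldots,F]\subseteq F^2$; this is exactly the leftmost term, giving injectivity and exactness at $\mathcal{M}(L)$. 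Surjectivity of the second arrow follows from a lifting argument: given $\bar y\in N\cap L^2$ choose $s\in S$ with $\pi(s)=\bar y$ and, using $\bar y\in L^2=\pi(F^2)$, write $s=f+r$ with $f\in F^2$ and $r\in R\subseteq S$, so that $f\in S\cap F^2$ still maps to $\bar y$. Exactness at $\mathcal{M}(L/N)$ is the matching computation: an element $x\in S\cap F^2$ dies in $(N\cap L^2)/[N,L,\ldots,L]$ iff $x\in[S,F,\ldots,F]+R$, and subtracting off its $[S,F,\ldots,F]$-part exhibits it as the image of an element of $R\cap F^2$.

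Sequence $(ii)$ I would obtain by splicing onto $(i)$ the elementary four-term exact sequence $0\to (N\cap L^2)/[N,L,\ldots,L]\to N/[N,L,\ldots,L]\to L/L^2\to L/(N+L^2)\to 0$, whose arrows are the inclusion $N\cap L^2\hookrightarrow N$, the map induced by $N\hookrightarrow L\twoheadrightarrow L/L^2$, and the canonical projection. Exactness at $N/[N,L,\ldots,L]$ holds because the kernel of $N/[N,L,\ldots,L]\to L/L^2$ is precisely $(N\cap L^2)/[N,L,\ldots,L]$, while exactness at $L/L^2$ and at $L/(N+L^2)$ is immediate from $L/(N+L^2)\cong (L/N)/(L/N)^2$. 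Thus $(ii)$ is the classical five-term sequence, the connecting map $\mathcal{M}(L/N)\to N/[N,L,\ldots,L]$ being displayed through its image $(N\cap L^2)/[N,L,\ldots,L]$, and its left half is furnished verbatim by $(i)$.

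For $(iii)$ I would specialise $(i)$ to the central case. If $N\subseteq Z(L)$ then $[N,L,\ldots,L]=0$, so the right-hand term collapses to $N\cap L^2$; moreover every generating bracket $[s,f_1,\ldots,f_{n-1}]$ of $[S,F,\ldots,F]$ has $\pi$-image $[\pi(s),\pi(f_1),\ldots,\pi(f_{n-1})]=0$ because $\pi(s)\in N$ is central, whence $[S,F,\ldots,F]\subseteq R$ and the leftmost term of $(i)$ becomes $[S,F,\ldots,F]/[R,F,\ldots,F]$. It then remains to realise this quotient as a homomorphic image of $N\otimes^{n-1}(L/L^2)$ through $\bar n\otimes\bar x_1\otimes\cdots\otimes\bar x_{n-1}\mapsto [\tilde n,\tilde x_1,\ldots,\tilde x_{n-1}]+[R,F,\ldots,F]$, with $\tilde n\in S$ lifting $n$ and $\tilde x_i\in F$ lifting $x_i$; surjectivity is clear since generators map to generators. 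The point, and the step I expect to be the main obstacle, is well-definedness: I must check that this bracket is independent of the lifts and descends through $S/R=N$ in the first slot and through $L/L^2$ in the remaining slots, all modulo $[R,F,\ldots,F]$. Independence in the first slot and of the $R$-part of the other lifts follows from graded antisymmetry, which sends any bracket with an $R$-entry into $[R,F,\ldots,F]$; but killing the $L^2$-part of the remaining slots requires expanding $[\tilde n,f_1,\ldots,f_{n-2},[g_1,\ldots,g_n]]$ by the graded Filippov--Jacobi identity and using the centrality of $N$ to absorb each resulting term (each inner bracket $[\tilde n,\ldots,g_i]$ lies in $R$ since $\pi(\tilde n)$ is central). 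Keeping the Koszul signs correct throughout this super-setting expansion is the only genuinely delicate bookkeeping; once it is in place, exactness at $\mathcal{M}(L)$ follows because the image of the tensor map is exactly the kernel $[S,F,\ldots,F]/[R,F,\ldots,F]$ of $\mathcal{M}(L)\to\mathcal{M}(L/N)$, and the remaining exactness is inherited from $(i)$.
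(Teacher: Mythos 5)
Your proposal is correct and is essentially the argument the paper intends: the paper's own ``proof'' simply defers to the Lie-algebra versions in \cite{b-c,sal}, and your free-presentation element chase --- the Hopf-type formulas $\mathcal{M}(L)=(R\cap F^2)/[R,F,\ldots,F]$ and $\mathcal{M}(L/N)=(S\cap F^2)/[S,F,\ldots,F]$, the lifting arguments for $(i)$, the splice for $(ii)$, and the tensor map $N\otimes^{n-1}(L/L^2)\to [S,F,\ldots,F]/[R,F,\ldots,F]$ for the central case --- is exactly the standard generalization being invoked. Two small remarks: your reading of $(ii)$ as the classical five-term sequence with the connecting map $\mathcal{M}(L/N)\to N/[N,L,\ldots,L]$ displayed through its image is indeed the only sensible one (taken literally, a surjection followed by an injection cannot be exact at the junction term unless it vanishes), and the Koszul signs you worry about in the graded Filippov--Jacobi expansion are actually harmless, since each term of the expansion lies in $[R,F,\ldots,F]$ individually, regardless of sign.
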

\begin{proof}
Lie versions of  these exact sequences are well-known  in the literature (see \cite{b-c,sal}). The proof is a simple generalization of them.
\end{proof}

The following corollary is an immediate consequence of the above proposition.
\begin{corollary}\label{coro27}
Under the notation of Proposition \ref{prop26}, if $L$ is a finite dimensional $n$-Lie superalgebra, then
\begin{itemize}
\item[$(i)$] $\mathcal{M}(L)$ is finite dimensional,
\item[$(ii)$] $\dim\mathcal{M}(L/N) \leq \dim\mathcal{M}(L) + \dim\frac{N \cap L^2}{[N,L,\ldots,L]}$,
\item[$(iii)$] $\dim\mathcal{M}(L) + \dim(N \cap L^2) = \dim\mathcal{M}(L/N) + \dim[N,L,\ldots,L] +
\dim\frac{R\cap [S,F,\ldots,F]}{[R,F,\ldots,F]}$,
\item[$(iv)$]  $\dim\mathcal{M}(L) + \dim(N \cap L^2) = \dim\mathcal{M}(L/N) +
\dim\frac{[S,F,\ldots,F]}{[R,F,\ldots,F]}$,
\item[$(v)$] $\dim\mathcal{M}(L) + \dim(L^2) = \dim\frac{F^2}{[R,F,\ldots,F]}$,
\item[$(vi)$] if $\mathcal{M}(L)=0$, then $\mathcal{M}(L/N)\cong\frac{N \cap L^2}{[N,L,\ldots,L]}$,
\item[$(vii)$] $\dim\mathcal{M}(L) + \dim(N \cap L^2) \leq \dim\mathcal{M}(L/N) +
\dim\big{(} N\otimes^{n-1}\frac{L}{L^2}\big{)}$, provided that $N$ is a central graded ideal of  $L$.
\end{itemize}
\end{corollary}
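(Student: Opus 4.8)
The plan is to isolate part $(i)$ as the only genuinely substantive point and to obtain parts $(ii)$--$(vii)$ purely by counting dimensions along the three exact sequences of Proposition \ref{prop26}, using that over a field the alternating sum of dimensions of an exact complex vanishes. So the first task is finiteness, after which every dimension below is a well-defined natural number and the rest is bookkeeping.

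For $(i)$ I would prove the stronger statement that the ambient space $F^2/[R,F,\ldots,F]$ is finite dimensional; since $\mathcal{M}(L)=(R\cap F^2)/[R,F,\ldots,F]$ is a subspace, finiteness of $\mathcal{M}(L)$ follows at once (and this is exactly the space appearing in $(v)$). Choosing a free presentation with $F$ free on a finite generating set, which is possible because $\dim L<\infty$, I would span $F^2/[R,F,\ldots,F]$ by the images of the $n$-fold brackets $[f_1,\ldots,f_n]$ of generators. The key observation is that, modulo $[R,F,\ldots,F]$, such a bracket depends only on the cosets $f_i+R$: replacing $f_i$ by $f_i+r$ with $r\in R$ alters the bracket by a term which, after using graded antisymmetry to move $r$ into the first slot, lies in $[R,F,\ldots,F]$. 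Hence the spanning set is indexed by $n$-tuples of basis elements of $L=F/R$ and is finite. An alternative inductive route peels off a nonzero central graded ideal $N$ and applies sequence $(iii)$, whose left-hand term $N\otimes^{n-1}\frac{L}{L^2}$ is finite dimensional, giving $\dim\mathcal{M}(L)\le\dim\mathcal{M}(L/N)+\dim\big(N\otimes^{n-1}\frac{L}{L^2}\big)$, i.e. $(vii)$; this is clean but must be supplemented to cover algebras with trivial centre, which is precisely the gap the direct spanning argument closes.

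With finiteness established, I would write sequence $(i)$ of Proposition \ref{prop26} as $0\to A\to\mathcal{M}(L)\to\mathcal{M}(L/N)\to B\to 0$ with $A=\frac{R\cap[S,F,\ldots,F]}{[R,F,\ldots,F]}$ and $B=\frac{N\cap L^2}{[N,L,\ldots,L]}$. Exactness at $\mathcal{M}(L/N)$ gives $\mathcal{M}(L/N)/\operatorname{im}\cong B$, so $\dim\mathcal{M}(L/N)=\dim(\operatorname{im})+\dim B\le\dim\mathcal{M}(L)+\dim B$, which is $(ii)$. The vanishing Euler characteristic $\dim A-\dim\mathcal{M}(L)+\dim\mathcal{M}(L/N)-\dim B=0$, together with $\dim B=\dim(N\cap L^2)-\dim[N,L,\ldots,L]$, rearranges into $(iii)$. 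For $(vi)$ I would simply set $\mathcal{M}(L)=0$, which forces $A=0$ and turns $\mathcal{M}(L/N)\to B$ into an isomorphism.

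To pass from $(iii)$ to $(iv)$ the only extra input is the short exact sequence $0\to A\to\frac{[S,F,\ldots,F]}{[R,F,\ldots,F]}\to[N,L,\ldots,L]\to 0$, obtained from the image of $[S,F,\ldots,F]$ in $L=F/R$ (note $[R,F,\ldots,F]\subseteq R\cap[S,F,\ldots,F]$, since $R$ is an ideal and $R\subseteq S$); this yields $\dim\frac{[S,F,\ldots,F]}{[R,F,\ldots,F]}=\dim[N,L,\ldots,L]+\dim A$, and substituting into $(iii)$ gives $(iv)$. Then $(v)$ is the special case $N=L$, $S=F$ of $(iv)$, where $N\cap L^2=L^2$, $\mathcal{M}(L/N)=\mathcal{M}(0)=0$ and $[S,F,\ldots,F]=F^2$. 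Finally, for $(vii)$ I would apply the same count to sequence $(iii)$: the kernel of $\mathcal{M}(L)\to\mathcal{M}(L/N)$ is a homomorphic image of $N\otimes^{n-1}\frac{L}{L^2}$, so $\dim(\operatorname{im})\ge\dim\mathcal{M}(L)-\dim\big(N\otimes^{n-1}\frac{L}{L^2}\big)$, and combining with $\dim\mathcal{M}(L/N)=\dim(\operatorname{im})+\dim(N\cap L^2)$ produces the stated inequality. The main obstacle is thus $(i)$ alone: parts $(ii)$--$(vii)$ are routine dimension arithmetic on the exact sequences, whereas finiteness requires the genuine spanning argument, with the additional care of handling the trivial-centre case.
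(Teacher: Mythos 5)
Your proposal is correct and takes essentially the same route as the paper, whose proof simply declares the corollary an immediate consequence of Proposition \ref{prop26}: you derive $(ii)$--$(vii)$ by exactly the intended dimension counting along the three exact sequences (vanishing Euler characteristic for $(iii)$, the short exact sequence $0\to \frac{R\cap[S,F,\ldots,F]}{[R,F,\ldots,F]}\to\frac{[S,F,\ldots,F]}{[R,F,\ldots,F]}\to[N,L,\ldots,L]\to 0$ for $(iv)$, and the specialization $N=L$, $S=F$ for $(v)$). Your spanning argument for $(i)$ --- that modulo $[R,F,\ldots,F]$ a bracket $[f_1,\ldots,f_n]$ depends only on the cosets $f_i+R$, so $F^2/[R,F,\ldots,F]$ is spanned by at most $(\dim L)^n$ brackets of coset representatives --- correctly supplies the one piece of finiteness the paper leaves implicit, and it rightly avoids the trivial-centre gap of the inductive alternative.
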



\section{Upper bounds on the dimension of $\mathcal{M}(L)$}

A  well-known result of  Schur  \cite{s}  states that if   the central factor of a group $G$ is finite, then so is
  $G'$, where $G'$ is the commutator subgroup of $G$.
Over half a century after Schur,  Wiegold \cite{w}
 proved that if  $|G/Z(G)|=p^m$, then $G'$ is a $p$-group of order at most $p^{\frac{1}{2}m(m-1)}$.

 Also in Lie algebras, Moneyhun \cite{m}  showed  that if $L$ is a Lie algebra with $\dim L/Z(L)=m$, then $\dim L^2\leq \frac{1}{2}m(m-1)$.
Her argument is very simple. If $\{\bar{x}_1,\ldots,\bar{x}_m\}$ is a basis for $L/Z(L)$, then $L^2$
 can be trivially generated by
 $\{[x_{i},x_{j}]|\ 1\leq i< j\leq m\}$.
  Therefore,  $\dim L^2\leq {m\choose2}$.
 Moreover, if $L$ is an $n$-Lie algebra  such that $\dim L/Z(L)=m$, then one may similarly show that
  the dimension of  $L^2$ is at most  ${m\choose n}$.
Nayak \cite{n1} has recently proved that if $L$ is a Lie superalgebra with $\dim(L/Z(L))=(m|k)$, then
$\dim L^2\leq \frac{1}{2}\big{(}(m+k)^2 + (k-m)\big{)}$.\\

 As the first result, we provide  an upper bound for the dimension of the commutator of an $n$-Lie superalgebra  with finite
dimensional central factor, which generalizes all above bounds.

\begin{theorem}\label{th1}
Let $L$ be an $n$-Lie superalgebra such that $\dim(L/Z(L))=(m|k)$. Then
\[\dim L^2\leq  \sum_{i=0}^{n} {m\choose{i}}\mathcal{L}(n-i,k),\]
where $\mathcal{L}(0,k)=1$ and $\mathcal{L}(t,k)=\sum_{j=1}^{t}{{t-1}\choose{j-1}}{k\choose j}$, for $1\leq t\leq n$.
\end{theorem}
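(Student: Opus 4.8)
The plan is to adapt Moneyhun's generator-counting argument to the graded $n$-ary setting. First I would fix a basis $\{\bar e_1,\dots,\bar e_m,\bar f_1,\dots,\bar f_k\}$ of $L/Z(L)$ with the $\bar e_i$ even and the $\bar f_j$ odd, and choose homogeneous lifts $e_1,\dots,e_m,f_1,\dots,f_k\in L$. Since $L=Z(L)+\langle e_1,\dots,e_m,f_1,\dots,f_k\rangle$ and any bracket having an entry in $Z(L)$ can be reordered to place that entry first and hence vanishes, $n$-linearity shows that $L^2$ is spanned by the brackets $[a_1,\dots,a_n]$ in which every $a_\ell$ is one of the chosen generators $e_i$ or $f_j$. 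Thus it suffices to bound the number of such generator-brackets that can be linearly independent.

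The key step is to exploit the graded antisymmetry (ii) to put each generator-bracket into a normal form depending only on the multiset of its entries. Moving two equal even generators into adjacent positions (picking up signs but never annihilating the bracket) and then applying (ii) to that pair gives $[\dots,e,e,\dots]=-(-1)^{|e||e|}[\dots,e,e,\dots]=-[\dots,e,e,\dots]$, so in characteristic zero the bracket vanishes; hence the even entries of a surviving bracket are pairwise distinct and form a subset of $\{e_1,\dots,e_m\}$. By contrast, for two equal odd generators $|f||f|=1$ gives $[\dots,f,f,\dots]=+[\dots,f,f,\dots]$, imposing no constraint, so the odd entries may repeat and are determined only as a multiset drawn from $\{f_1,\dots,f_k\}$. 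Since any two orderings of the same entries differ by adjacent transpositions, each of which only introduces a nonzero scalar, a surviving bracket is determined up to a scalar by the pair (even subset, odd multiset); consequently the span of all generator-brackets, and hence $L^2$, has dimension at most the number of such pairs.

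It remains to count these pairs. Splitting according to the number $i$ of even entries ($0\le i\le n$), the even part is a size-$i$ subset of an $m$-set, contributing $\binom{m}{i}$ choices, while the odd part is a size-$(n-i)$ multiset from a $k$-set. Grouping the latter by the number $j$ of distinct odd generators actually used, one selects the $j$ generators in $\binom{k}{j}$ ways and then distributes the $n-i$ slots among them with each used at least once, i.e. writes $n-i=m_1+\cdots+m_j$ with all $m_\ell\ge1$, which has $\binom{n-i-1}{j-1}$ solutions; summing over $j$ yields exactly $\mathcal{L}(n-i,k)$, the empty case $n-i=0$ accounting for $\mathcal{L}(0,k)=1$. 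Multiplying the even and odd counts and summing over $i$ gives the asserted bound $\sum_{i=0}^{n}\binom{m}{i}\mathcal{L}(n-i,k)$.

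The only genuinely delicate point is the sign bookkeeping in the normal-form step: one must verify that repeatedly applying the graded antisymmetry to bring the entries into canonical order never forces a relation beyond the vanishing caused by a repeated even entry, so that assigning to each surviving bracket its pair (even subset, odd multiset) really does bound $\dim L^2$ from above. The remaining combinatorics is routine, since the definition of $\mathcal{L}(t,k)$ is precisely the tally of size-$t$ multisets from a $k$-set organized by the number of distinct elements used.
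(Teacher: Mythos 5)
Your proposal is correct and takes essentially the same route as the paper's proof: both span $L^2$ by brackets of homogeneous lifts of a basis of $L/Z(L)$, use graded antisymmetry to reduce each bracket to a normal form indexed by a subset of the even generators together with a multiset of the odd generators, and count these by the number $j$ of distinct odd elements used, recovering $\mathcal{L}(n-i,k)=\sum_{j}\binom{n-i-1}{j-1}\binom{k}{j}$ exactly as the paper does with its quantities $\mathcal{L}_j$. Your explicit check that a repeated even entry forces the bracket to vanish in characteristic zero (while repeated odd entries impose no relation) is precisely the sign bookkeeping the paper leaves implicit in the phrase ``the graded antisymmetric property implies.''
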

\begin{proof}
Let $L=L_0\oplus L_1$ and $\{\bar{x}_1,\ldots,\bar{x}_m,\bar{y}_1,\ldots,\bar{y}_k\}$ be a basis for $L/Z(L)$, where
$x_1,\ldots,x_m\in L_0$ and $y_1,\ldots,y_k\in L_1$. The graded antisymmetric property of $n$-Lie superalgebras
 implies that the following set generates $L^2$:
\begin{eqnarray*}
\mathcal{B}=&\Big{\{}[x_{i_1},\ldots,x_{i_s},y_{r_1},\ldots,y_{r_t}]\ |\ 0\leq s,t \leq n,\ s+t=n,\\
& 1\leq i_1 < \cdots < i_s\leq m,\ 1\leq r_1 \leq \cdots \leq r_t\leq k\ \Big{\}}.
\end{eqnarray*}
 First, let $t=0$. Then $s=n$ and the number of commutators  $[x_{i_1},\ldots,x_{i_n}]$ in $\mathcal{B}$ is clearly ${m\choose n}$.
 Now, let $t=n$ and $\mathcal{L}_j$ be the number of commutators $[y_{r_1},\ldots,y_{r_n}]$ in $\mathcal{B}$ such that the set $\{y_{r_1},\ldots,y_{r_n}\}$ contains exactly $j$ distinct elements ($1\leq j\leq n$).
Therefore, $\mathcal{L}_1=k={n-1\choose 0}{k\choose 1}$, since we have these $k$ commutators:
 $$[y_1,\ldots,y_1],\ldots, [y_k,\ldots,y_k].$$
 Also  $\mathcal{L}_2={n-1\choose 1}{k\choose 2}$, which is the number  of  commutators of the form:
 \[[y_{r_c},y_{r_d},\ldots,y_{r_d}], \ [y_{r_c},y_{r_c},y_{r_d}\ldots,y_{r_d}],\ldots,[y_{r_c},\ldots,y_{r_c},y_{r_d}],\]
 in which $1\leq r_c < r_d\leq k$.
Using a similar combinatorial argument, one may check that
 $$\mathcal{L}_j={{n-1}\choose{j-1}}{k\choose j}.$$
Now put $\mathcal{L}(n,k)=\mathcal{L}_1+\mathcal{L}_2+\cdots+\mathcal{L}_{n}=\sum_{j=1}^{n}{{n-1}\choose{j-1}}{k\choose j}$. Thus in  case $t=n$,
the number of commutators  $[y_{r_1},\ldots,y_{r_n}]$ in $\mathcal{B}$ is $\mathcal{L}(n,k)$. By applying a similar technique for all $1\leq t\leq n$
and adding case $t=0$, we have
 $$|\mathcal{B}|={m\choose 0}\mathcal{L}(n,k)+{m\choose 1}\mathcal{L}(n-1,k)+ \cdots+{m\choose n-1}\mathcal{L}(1,k)+{m\choose n},$$
 which completes the proof.
\end{proof}

\begin{remark}\label{rem2} \normalfont
Clearly  if $L$ is a Lie algebra i.e. $n=2$ and $k=0$, then $\mathcal{L}(t,k)=0$ for every $1\leq t\leq n$ and hence our bound is equal to $\frac{1}{2}m(m-1)$ which is the Moneyhun's bound \cite{m}. Also if  $L$ is an $n$-Lie algebra i.e.  $n\geq 2$ and $k=0$, then our bound is equal to
${m\choose n}$    given in \cite{d-s}.
Finally if $L$ is a Lie superalgebra i.e.  $n=2$ and $k\geq 1$, then our bound  will be
\begin{eqnarray*}
\dim L^2 \leq  \sum_{i=0}^{2} {m\choose{i}}\mathcal{L}(2-i,k)=\frac{1}{2}\big{(}(m+k)^2 + (k-m)\big{)},\\
\end{eqnarray*}
which is the Nayak's bound \cite{n1}.
\end{remark}

\begin{theorem}\label{th3}
Let $L$ be an $n$-Lie superalgebra of dimension $(m|k)$. Then
\begin{equation*}\label{eq1}
\dim \mathcal{M}(L)\leq  \sum_{i=0}^{n} {m\choose{i}}\mathcal{L}(n-i,k),
\end{equation*}
where $\mathcal{L}(t,k)$ is the function defined in Theorem \ref{th1}. In particular, the equality occurs if and only if $L$ is an abelian $n$-Lie superalgebra.
\end{theorem}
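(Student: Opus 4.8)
The plan is to deduce the inequality from Theorem \ref{th1} by passing to a cover of $L$, and then to analyse exactly when the resulting chain of estimates is tight. Throughout, write $b(m,k)=\sum_{i=0}^{n}\binom{m}{i}\mathcal{L}(n-i,k)$ for the claimed bound. Before anything else I would record the elementary monotonicity property that $b(m',k')\le b(m,k)$ whenever $m'\le m$ and $k'\le k$; this is immediate since $\binom{m'}{i}\le\binom{m}{i}$, since $\mathcal{L}(n-i,k')\le\mathcal{L}(n-i,k)$ (because $\binom{k'}{j}\le\binom{k}{j}$), and since every summand defining $b$ is non-negative.

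By Corollary \ref{coro23}, $L$ admits a cover, that is, a stem cover $0\to M\to K\to L\to 0$ with $M\cong\mathcal{M}(L)$ and $M\subseteq Z(K)\cap K^2$; here $K$ is finite dimensional because $\dim K=\dim L+\dim\mathcal{M}(L)$ and $\mathcal{M}(L)$ is finite dimensional by Corollary \ref{coro27}$(i)$. Since $M\subseteq K^2$ we get $\dim\mathcal{M}(L)=\dim M\le\dim K^2$. Next, set $(m'|k')=\dim(K/Z(K))$ and apply Theorem \ref{th1} to $K$, obtaining $\dim K^2\le b(m',k')$. Finally, as $M\subseteq Z(K)$, the identity on $K$ induces a grading-preserving surjection $L\cong K/M\twoheadrightarrow K/Z(K)$, whence $m'\le m$ and $k'\le k$. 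Combining these three steps with the monotonicity above yields
\[\dim\mathcal{M}(L)\le\dim K^2\le b(m',k')\le b(m,k),\]
which is the asserted bound.

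For the equality statement the forward direction is then essentially free: if $\dim\mathcal{M}(L)=b(m,k)$, every inequality in the displayed chain must be an equality, so in particular $\dim M=\dim K^2$; as $M\subseteq K^2$ this forces $M=K^2$, and therefore $L\cong K/M=K/K^2$ has trivial derived superalgebra, i.e.\ $L$ is abelian. For the converse I would compute $\mathcal{M}(L)$ directly for abelian $L$. Choosing a free presentation $0\to R\to F\to L\to 0$ with $F$ free on a homogeneous basis of size $(m|k)$, the hypothesis that $L$ is abelian gives $F^2\subseteq R$ together with $F/F^2\cong L$, hence $R=F^2$ and $[R,F,\ldots,F]=F^3$; consequently $\mathcal{M}(L)=(R\cap F^2)/[R,F,\ldots,F]=F^2/F^3$. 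The spanning set $\mathcal{B}$ from the proof of Theorem \ref{th1} descends to a spanning set of $F^2/F^3$ of cardinality $b(m,k)$, so it only remains to verify that these images are linearly independent.

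The main obstacle is precisely this last point. One must check that in the weight-$n$ homogeneous component $F^2/F^3$ of the free $n$-Lie superalgebra the only relations among the degree-$n$ brackets of generators are those forced by multilinearity and the graded antisymmetry of Definition \ref{def0}$(ii)$, the graded Filippov--Jacobi identity of Definition \ref{def0}$(iii)$ contributing nothing in this weight (it first produces relations in weight $2n-1$). Granting this, $\mathcal{B}$ is in fact a basis of $F^2/F^3$, so $\dim\mathcal{M}(L)=|\mathcal{B}|=b(m,k)$, which completes the equality case.
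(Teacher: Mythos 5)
Your inequality and the forward direction of the equality are essentially the paper's own argument: the paper likewise takes a stem cover $0\to M\to K\to L\to 0$ from Corollary \ref{coro23}, bounds $\dim\mathcal{M}(L)=\dim M\leq\dim K^2$, and applies Theorem \ref{th1} via $\dim(K/Z(K))\leq\dim(K/M)=(m|k)$; your explicit monotonicity lemma for $b(m,k)$ is a detail the paper silently assumes, and making it precise is a small improvement. The converse is where you genuinely diverge. The paper does not compute $\mathcal{M}(L)$ from a free presentation; it constructs a stem extension $0\to M\to K\stackrel{\pi}{\to} L\to 0$ with $K=N\oplus M$ and $M=K^2\subseteq Z(K)$ spanned by (a copy of) $\mathcal{B}$ --- a class-two algebra in which the graded Filippov--Jacobi identity holds vacuously because all brackets are central --- and then invokes Proposition \ref{prop24}: every stem extension is a homomorphic image of a cover, so this stem extension of maximal dimension is itself a cover, whence $M\cong\mathcal{M}(L)$ and $\dim\mathcal{M}(L)=|\mathcal{B}|$. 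Your route instead establishes $\mathcal{M}(L)\cong F^2/F^3$ for a minimal free presentation (the reduction $R=F^2$ by the dimension count $\dim F/F^2=\dim F/R$ is correct) and then needs $\dim F^2/F^3=b(m,k)$, i.e.\ that the weight-$n$ component of the free $n$-Lie superalgebra is the $n$-th super-exterior power $\Lambda^n V$ of the generating superspace. You correctly isolate this as the crux and correctly observe that the Filippov--Jacobi identity first produces relations in weight $2n-1$, but you leave it at ``granting this,'' which is the one soft spot in your write-up; note that the cleanest way to close it is precisely the paper's construction, since the class-two algebra $V\oplus\Lambda^n V$ receives a homomorphism from $F/F^3$ carrying $F^2/F^3$ onto $\Lambda^n V$, which together with your spanning argument forces linear independence of $\mathcal{B}$. (To be fair, the paper's own version of this independence claim is also asserted rather than argued.) What your approach buys is an explicit presentation-theoretic formula $\mathcal{M}(L)\cong F^2/F^3$ for abelian $L$; what the paper's approach buys is that the Jacobi-type verification is trivial by construction and no structure theory of free $n$-Lie superalgebras is needed, at the price of the maximality argument through Proposition \ref{prop24}.
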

\begin{proof}
Regarding Corollary \ref{coro23}, let $0\rightarrow M \rightarrow K \stackrel{\pi}{\rightarrow} L\rightarrow 0$ be a stem cover of $L$, i.e. $M\subseteq Z(K)\cap K^2$ and $M\cong \mathcal{M}(L)$. Since $\dim(K/Z(K))\leq \dim(K/M)=\dim L=(m|k)$, we get
$$\dim\mathcal{M}(L)=\dim M\leq \dim K^2\leq \sum_{i=0}^{n} {m\choose{i}}\mathcal{L}(n-i,k).$$
Now if the equality holds, then the above inequality implies that $M=K^2$ and hence $L=K/M$ is abelian.

Conversely, assume that $L$ is abelian, $0\rightarrow M \rightarrow K \stackrel{\pi}{\rightarrow} L\rightarrow 0$ is a stem extension of $L$, and   $\{\bar{x}_1,\ldots,\bar{x}_m,\bar{y}_1,\ldots,\bar{y}_k\}$ is a basis for $L=L_0\oplus L_1$, where
$\pi(x_i)=\bar{x}_i\in L_0$ for $1\leq i\leq m$, and $\pi(y_j)=\bar{y}_j\in L_1$ for  $1\leq j\leq k$. Then $x_i\in K_0$ and $y_j\in K_1$.
 Clearly $M=K^2\subseteq Z(K)$ and also $\mathcal{B}$ (given in Theorem \ref{th1}) is contained in $\ker\pi=M$, since $L$ is abelian.
Moreover  $K$, as a vector superspace, can be generated by the set $N\cup M$, where $N$ is the sub-superspace of $K$ generated by the set
 $\{x_1,\ldots,x_m,y_1,\ldots,y_k\}$. Clearly $N\cap M=0$ and hence as  vector superspaces, we have  $K=N\oplus M$, which  will have
  the structure of an $n$-Lie superalgebra, since $M$ is central.
 Moreover as $M=K^2\subseteq Z(K)$,
 $\mathcal{B}$ is actually a basis for $M$ . Thus
 $$\dim M=|\mathcal{B}|=\sum_{i=0}^{n} {m\choose{i}}\mathcal{L}(n-i,k).$$
 On the other hand,  $0\rightarrow K^2 \rightarrow K \stackrel{\pi}{\rightarrow} L\rightarrow 0$ is  a stem extension of $L$ in which
  $K$ is of maximal dimension. Hence by Proposition \ref{prop24}, $K$ is a cover of $L$ and so
 $M\cong\mathcal{M}(L)$, which completes the proof.
\end{proof}

\begin{corollary}\label{coro5}
Let $L$ be an $n$-Lie superalgebra of dimension $(m|k)$. Then
\[\dim \mathcal{M}(L)\leq  \sum_{i=0}^{n} {m\choose{i}}\mathcal{L}(n-i,k)-\dim L^2.\]
\end{corollary}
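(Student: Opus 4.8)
The plan is to combine the bound on the Schur multiplier from Theorem \ref{th3} with the exact relationship between $\dim\mathcal{M}(L)$, $\dim L^2$, and the multiplier of an abelian quotient. The key observation is that Corollary \ref{coro27}$(v)$ provides the identity $\dim\mathcal{M}(L)+\dim(L^2)=\dim\frac{F^2}{[R,F,\ldots,F]}$ for a free presentation $0\to R\to F\to L\to 0$. Since $\frac{F^2}{[R,F,\ldots,F]}$ is precisely the top term appearing in the computation of the multiplier of the abelianization $L/L^2$, one expects the left-hand side to be controlled by the multiplier of an abelian $n$-Lie superalgebra of the same dimension, for which Theorem \ref{th3} gives exactly $\sum_{i=0}^{n}{m\choose i}\mathcal{L}(n-i,k)$ as an equality.

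Concretely, I would apply Lemma \ref{lem25} or Proposition \ref{prop26} with $N=L^2$ to relate $\mathcal{M}(L)$ and $\mathcal{M}(L/L^2)$. The quotient $L/L^2$ is an abelian $n$-Lie superalgebra, so by the equality case of Theorem \ref{th3} its multiplier has dimension exactly $\sum_{i=0}^{n}{m'\choose i}\mathcal{L}(n-i,k')$, where $(m'|k')=\dim(L/L^2)$. The cleaner route, however, is the direct one: from Corollary \ref{coro27}$(v)$ we have
\[
\dim\mathcal{M}(L)+\dim(L^2)=\dim\frac{F^2}{[R,F,\ldots,F]}.
\]
I would then argue that $\dim\frac{F^2}{[R,F,\ldots,F]}$ is itself bounded above by $\sum_{i=0}^{n}{m\choose i}\mathcal{L}(n-i,k)$, since this quantity coincides with $\dim\mathcal{M}(A)$ for the abelian $n$-Lie superalgebra $A$ of dimension $(m|k)$, and the free object $F$ maps onto such an abelian algebra through $L$. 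Rearranging yields
\[
\dim\mathcal{M}(L)\leq\sum_{i=0}^{n}{m\choose i}\mathcal{L}(n-i,k)-\dim(L^2),
\]
as required.

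The main obstacle I anticipate is justifying the inequality $\dim\frac{F^2}{[R,F,\ldots,F]}\leq\sum_{i=0}^{n}{m\choose i}\mathcal{L}(n-i,k)$ rigorously, rather than just as an analogy with the abelian case. The set $\mathcal{B}$ from the proof of Theorem \ref{th1} generates the relevant space modulo lower relations, so the content is in checking that the generators of $\frac{F^2}{[R,F,\ldots,F]}$ coming from brackets of basis lifts of $L/L^2$ number at most $|\mathcal{B}|$; this is exactly the combinatorial counting already performed, now read inside $F/[R,F,\ldots,F]$ rather than inside an abelian algebra. Once this is secured, the corollary follows by a one-line rearrangement, so the estimate of this single quantity is where all the care must go.
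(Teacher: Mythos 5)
Your overall strategy is the paper's: combine Corollary \ref{coro27}$(v)$ with an upper bound on $\dim\frac{F^2}{[R,F,\ldots,F]}$, then rearrange. However, the justification you give for the key inequality contains two genuine errors. First, $\dim\frac{F^2}{[R,F,\ldots,F]}$ does \emph{not} coincide with $\dim\mathcal{M}(A)$ for the abelian $n$-Lie superalgebra $A$ of dimension $(m|k)$: by $(v)$ itself this quantity equals $\dim\mathcal{M}(L)+\dim L^2$, which depends on $L$ and is in general strictly smaller than the abelian value. Already for $n=2$, $k=0$ and $L=H(1)\oplus A(1)$ one gets $\dim\mathcal{M}(L)+\dim L^2=4+1=5<6={4\choose 2}=\dim\mathcal{M}(A(4))$. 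The identification $\mathcal{M}(A)\cong F^2/[R_A,F,\ldots,F]$ requires a free presentation of $A$ itself; taking $A=L/L^2$, the relation ideal is $R+F^2$, not $R$, and comparing only yields $\dim\frac{F^2}{[R,F,\ldots,F]}\geq\dim\mathcal{M}(L/L^2)$ --- the wrong direction, which is also why your first route through Lemma \ref{lem25}/Proposition \ref{prop26} with $N=L^2$ does not cleanly produce the $-\dim L^2$ term. Second, in your counting sketch the spanning set must come from brackets of lifts of a basis of $L$, not of $L/L^2$: a bracket with an entry in $\pi^{-1}(L^2)=R+F^2$ need not lie in $[R,F,\ldots,F]$. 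Again $H(1)$ illustrates this: $\dim F^2/[R,F]=3$, whereas brackets of two lifts of an $L/L^2$-basis span at most one dimension.

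Both problems are repaired by the paper's one-line observation: every bracket with an entry in $R$ lies in $[R,F,\ldots,F]$, so $R/[R,F,\ldots,F]$ is central in $K=F/[R,F,\ldots,F]$; hence $\dim\big(K/Z(K)\big)\leq\dim\big(K\big/\tfrac{R}{[R,F,\ldots,F]}\big)=\dim L=(m|k)$, and Theorem \ref{th1} applied to $K$ gives $\dim K^2=\dim\frac{F^2}{[R,F,\ldots,F]}\leq\sum_{i=0}^{n}{m\choose i}\mathcal{L}(n-i,k)$ directly. Equivalently, in your counting language: since entries from $R$ die modulo $[R,F,\ldots,F]$, the brackets of lifts of a basis of $L$ (all $m+k$ of them) span $F^2/[R,F,\ldots,F]$, and the graded antisymmetry count from Theorem \ref{th1} bounds their number by $|\mathcal{B}|$ with parameters $(m|k)$. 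With this repair, your argument becomes exactly the paper's proof; without it, the stated justification of the crucial inequality is false as written.
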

\begin{proof}
Let $0\to R\to F\to L\to 0$ be a free presentation of $L$.
Clearly, dimension of the central factor of $F/[R,F,\ldots,F]$ is less than or equal to $\dim (F/R)=(m|k)$.
Hence by Theorem \ref{th1}, $\dim (F^2/[R,F,\ldots,F])\leq \sum_{i=0}^{n} {m\choose{i}}\mathcal{L}(n-i,k)$.
Now using Corollary \ref{coro27} (v), we have
$\dim\mathcal{M}(L) + \dim(L^2) = \dim (F^2/[R,F,\ldots,F]),$
which completes the proof.
\end{proof}

\begin{remark}\label{prob4} \normalfont
One interesting problem in the theory of Lie algebras is the characterization of  Lie algebras using  $t(L)=\frac{1}{2}m(m-1)-\dim\mathcal{M}(L)$,
in which $L$ is an $m$-dimensional Lie algebra. The characterization  of nilpotent Lie algebras for $0\leq t(L)\leq 8$ has been studied in
\cite{b-m-s,h,h-s}. Similarly, Green's result \cite{grn}  yielded a lot of interest on the classification of finite $p$-groups by $t(G),$ investigated by several authors (see \cite{brk, lls, gsc,zh}).

Now, let $L$ be an $n$-Lie superalgebra.  A natural question arises whether one could characterize  $(m|k)$-dimensional
 $n$-Lie superalgebras by $t(n,L)$, where
\[t(n,L)=\sum_{i=0}^{n} {m\choose{i}}\mathcal{L}(n-i,k)-\dim\mathcal{M}(L).\]
In \cite{liu}, this question has been answered  for $t(2,L)\leq 2$.
\end{remark}

The following lemma is needed for proving the next main theorem.

\begin{lemma}\label{lem5}
Let $A=A_0\oplus A_1$ and $B=B_0\oplus B_1$ be two finite dimensional $n$-Lie superalgebras. Then
\begin{eqnarray*}
\dim\mathcal{M}(A\oplus B)=\dim\mathcal{M}(A)+\dim\mathcal{M}(B)
+\sum_{i=1}^{n-1} \Bigg{[}\ \sum_{j=0}^{i}{s_0\choose j}s_1^{i-j}\ \sum_{j=0}^{n-i}{t_0\choose j}t_1^{n-i-j}\Bigg{]},
\end{eqnarray*}
where $s_a=\dim(A_a/A^2\cap A_a)$ and $t_a=\dim(B_a/B^2\cap B_a)$, for $a\in \mathbb{Z}_2$.
\end{lemma}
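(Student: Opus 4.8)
The plan is to compute $\dim\mathcal{M}(A\oplus B)$ by relating it to the multipliers of the summands through the exact sequences of Proposition \ref{prop26}, together with the additive formula for $\dim\mathcal{M}$ in Corollary \ref{coro27}(v). The cleanest route is to apply the Hopf-type formula $\dim\mathcal{M}(L)+\dim L^2=\dim(F^2/[R,F,\ldots,F])$ and reduce everything to counting a spanning set, exactly as in the proof of Theorem \ref{th1}. First I would take free presentations $0\to R_A\to F_A\to A\to 0$ and $0\to R_B\to F_B\to B\to 0$, and build from them a free presentation of $A\oplus B$ on the combined generators. Writing $D=A\oplus B$, the key observation is that $D^2=A^2\oplus B^2$, so the ``extra'' terms in the formula must come entirely from the cross brackets $[a_1,\ldots,a_i,b_1,\ldots,b_{n-i}]$ mixing generators from the two factors, where $1\le i\le n-1$.

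The main combinatorial step is to count these mixed brackets modulo the relations. Arguing as in Theorem \ref{th1}, a spanning set for $F_D^2/[R_D,F_D,\ldots,F_D]$ splits into three disjoint families: brackets using only $A$-generators (contributing $\dim\mathcal{M}(A)+\dim A^2$ via Corollary \ref{coro27}(v) applied to $A$), brackets using only $B$-generators (contributing $\dim\mathcal{M}(B)+\dim B^2$), and the genuinely mixed brackets with $i$ entries from $A$ and $n-i$ from $B$ for each $1\le i\le n-1$. For the mixed family, the relevant generators are representatives of $A_a/(A^2\cap A_a)$ and $B_a/(B^2\cap B_a)$, since any generator landing in $A^2$ or $B^2$ produces a bracket already absorbed into the derived algebra. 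The count of mixed brackets with $i$ slots filled by $A$-generators then factors as a product: choosing from the even part $A_0$ (strictly increasing indices, giving a binomial $\binom{s_0}{j}$) and the odd part $A_1$ (nondecreasing indices, contributing the power $s_1^{i-j}$), summed over how many even slots $j$ are used, and symmetrically for $B$. This yields precisely
\begin{equation*}
\sum_{j=0}^{i}\binom{s_0}{j}s_1^{i-j}\ \sum_{j=0}^{n-i}\binom{t_0}{j}t_1^{n-i-j}
\end{equation*}
for each $i$, and summing over $1\le i\le n-1$ gives the stated cross term.

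Assembling the pieces, the Hopf formula for $D=A\oplus B$ reads $\dim\mathcal{M}(D)+\dim D^2$ equals the total size of the spanning set, while $\dim D^2=\dim A^2+\dim B^2$; subtracting and using the analogous formulas for $A$ and $B$ cancels the derived-algebra dimensions and leaves exactly $\dim\mathcal{M}(A)+\dim\mathcal{M}(B)$ plus the cross term. The hard part will be justifying that the three families of brackets are not merely a spanning set but are linearly independent in the relevant quotient, so that the inequalities become equalities; for this I expect to exhibit an explicit cover of $D$ realizing the full count, in the spirit of the converse direction of Theorem \ref{th3}, where one constructs a stem extension of maximal dimension and invokes Proposition \ref{prop24}. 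The delicate point is verifying that the mixed brackets remain independent modulo $[R_D,F_D,\ldots,F_D]$—that is, that no graded Filippov-Jacobi relation forces an unexpected dependence among cross terms of different types $i$—which is where the direct-sum structure $D^2=A^2\oplus B^2$ is essential, since it guarantees that brackets with distinct $A$/$B$-splitting patterns cannot interfere.
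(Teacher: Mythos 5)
Your route is genuinely different from the paper's. The paper takes a stem cover $0\to N\to H\to A\oplus B\to 0$, chooses graded ideals $X,Y$ of $H$ with $X/N\cong A$ and $Y/N\cong B$, decomposes $H^2$ (and hence $N$) as $X^2+Y^2+\sum_{i=1}^{n-1}[X,\ldots,X,Y,\ldots,Y]$, bounds $\dim(N\cap X^2)$ and $\dim(N\cap Y^2)$ by $\dim\mathcal{M}(A)$ and $\dim\mathcal{M}(B)$ using Lemma \ref{lem25}(iv), and bounds each cross summand via a surjective multilinear map from $(A/A^2)\times\cdots\times(B/B^2)$ --- which is exactly the count you perform inside the free presentation. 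Your Hopf-formula version (Corollary \ref{coro27}(v) applied to a presentation of $A\oplus B$ on the combined generators) is workable and arguably more self-contained, but two steps that the cover argument gets almost for free must be made explicit in your setting: first, that the pure-$A$ family contributes at most $\dim\mathcal{M}(A)+\dim A^2$ requires checking $R_A\subseteq R_D\cap F_A$, whence $[R_A,F_A,\ldots,F_A]\subseteq[R_D,F_D,\ldots,F_D]$; second, that a mixed bracket with an entry lying in $F_A^2$, $F_B^2$, $R_A$ or $R_B$ dies modulo $[R_D,F_D,\ldots,F_D]$ is not automatic --- it needs the graded Filippov--Jacobi identity together with the observation that every bracket word mixing generators of the two factors already lies in $R_D$ (mixed brackets vanish in $A\oplus B$). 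Your phrase ``absorbed into the derived algebra'' is gesturing at this but is not yet a proof.

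On the equality, you are in fact exactly where the paper stands: its proof establishes only the inequality and then states that the remainder is an analogue of the Lie-algebra and $n$-Lie-algebra results of Batten--Moneyhun--Stitzinger and Darabi--Saeedi, noting that only the inequality is needed for Theorem \ref{th6}. So deferring independence is consistent with the paper. However, there is one concrete obstruction to your plan of certifying independence for the full list as stated: by graded antisymmetry, swapping two odd entries introduces the factor $-(-1)^{1\cdot 1}=+1$, so brackets are \emph{symmetric} in odd entries, and tuples with the same multiset of odd indices give the same bracket. Hence the cross term for fixed $i$ spans at most $\sum_{j}\binom{s_0}{j}\binom{s_1+i-j-1}{i-j}\sum_{j}\binom{t_0}{j}\binom{t_1+n-i-j-1}{n-i-j}$ dimensions, which is strictly smaller than $\sum_{j}\binom{s_0}{j}s_1^{i-j}\sum_{j}\binom{t_0}{j}t_1^{n-i-j}$ once $n\geq 3$ and the odd ranks are at least $2$; your ``nondecreasing indices contribute $s_1^{i-j}$'' conflates tuples with multisets. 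This slip is harmless for the upper bound (which is all the paper proves or uses), but it means no cover construction can realize the tuple count literally, so the equality in the displayed formula should, for $n\geq 3$, be read as the inequality that the paper actually establishes.
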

\begin{proof}
Let $0\to N\to H\to A\oplus B\to 0$ be a stem cover of $A\oplus B$. Then $\frac{H}{N}\cong A\oplus B$ and
$\mathcal{M}(A\oplus B)\cong N\subseteq Z(H)\cap H^2$. Consider graded ideals $X$ and $Y$ of $H$ such that
$\frac{X}{N}\cong A$ and $\frac{Y}{N}\cong B$. Then since $H=X+Y$, we have
\[H^2=X^2+Y^2+\sum_{i=1}^{n-1}[\underbrace{X,\ldots,X}_{i\ times},\underbrace{Y,\ldots,Y}_{(n-i)\ times}].\]
One can easily check that
$N=(N\cap X^2)+(N\cap Y^2)+\sum_{i=1}^{n-1}[X,\ldots,X,Y,\ldots,Y]$, and hence by
Lemma \ref{lem25} (iv) we have
\[\dim\mathcal{M}(A\oplus B)=\dim N\leq \dim \mathcal{M}(A)+
\dim \mathcal{M}(B)+\sum_{i=1}^{n-1}\dim[X,\ldots,X,Y,\ldots,Y].\]
Now, define
 \begin{equation}
\begin{array}{rcl}
f:\dfrac{A}{A^2}\times\cdots\times\dfrac{A}{A^2}\times\dfrac{B}{B^2}\times\cdots\times\dfrac{B}{B^2}&\longrightarrow &[X,\ldots,X,Y,\ldots,Y] \\ \\ \nonumber
(\bar{a}_1,\ldots,\bar{a}_i,\bar{b}_1,\ldots,\bar{b}_{n-i})&\longmapsto & [x_{a_1},\ldots,x_{a_i},y_{b_1},\ldots,y_{b_{n-i}}],
\end{array}
\end{equation}
where $x_{a_i}+N\mapsto a_i$ in $\frac{X}{N}\cong A$, and  $y_{b_i}+N\mapsto b_i$ in $\frac{Y}{N}\cong B$.
One may check that $f$ is a surjective multilinear map.
Clearly
 $$\frac{A}{A^2}=\frac{A_0}{A^2\cap A_0}\oplus\frac{A_1}{A^2\cap A_1}$$
  and we have a similar equality for $\frac{B}{B^2}$.
 Therefore
  $$\dim [\underbrace{X,\ldots,X}_{i},\underbrace{Y,\ldots,Y}_{n-i}]=\dim (\Ima f)\leq
  \sum_{j=0}^{i}{s_0\choose j}s_1^{i-j}\ \sum_{j=0}^{n-i}{t_0\choose j}t_1^{n-i-j}$$
 Hence
 \begin{eqnarray*}
\dim\mathcal{M}(A\oplus B)\leq\dim\mathcal{M}(A)+\dim\mathcal{M}(B)
+\sum_{i=1}^{n-1} \Bigg{[}\sum_{j=0}^{i}{s_0\choose j}s_1^{i-j}\ \sum_{j=0}^{n-i}{t_0\choose j}t_1^{n-i-j}\Bigg{]}.
\end{eqnarray*}
For the next main theorem, we only need the above inequality, so we left the rest of proof which is
an $n$-Lie superalgebra analogue of \cite[Theorem 1]{b-m-s} and \cite[Theorem 3.6]{d-s}.
\end{proof}

It is easy to see that if $L$ is a Lie algebra, i.e. $n=2$ and $s_1=t_1=0$, then the above  sigma is equal to $\dim(A/A^2)\dim(B/B^2)$,
which yields a same dimension as in \cite[Theorem 1]{b-m-s}.

In Theorem \ref{th3}, we determined the dimension of $\mathcal{M}(L)$  when $L$ is an abelian $n$-Lie superalgebra.
In the next result, using Theorem \ref{th3} and Lemma \ref{lem5} we give an upper bound for the dimension of $\mathcal{M}(L)$ in which
$L$ is (non-abelian) nilpotent with $(1|0)$-dimensional derived superalgebra. Note that the following  bound is less than or
equal to the bound given in Corollary \ref{coro5} in  case $\dim L^2=1$.

\begin{theorem}\label{th6}
Let $L$ be a  nilpotent $n$-Lie superalgebra  such that $\dim L=(m|k)$, $\dim Z(L)=(p|q)$ and $\dim L^2=(1|0)$. Then
\begin{eqnarray*}
\dim\mathcal{M}(L)&\leq& \sum_{i=0}^{n} \Bigg{[}{p-1\choose{i}}\mathcal{L}(n-i,q)
+ {m-p+1\choose{i}}\mathcal{L}(n-i,k-q)\Bigg{]}\\
&+&  \sum_{i=1}^{n-1} \Bigg{[}\sum_{j=0}^{i}{p-1\choose j}q^{i-j}\ \sum_{j=0}^{n-i}{m-p\choose j}(k-q)^{n-i-j}\Bigg{]}-1,
\end{eqnarray*}
\end{theorem}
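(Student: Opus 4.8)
The plan is to realise $L$ as a direct sum of an abelian $n$-Lie superalgebra and a Heisenberg-type factor, and then assemble the bound from Theorem \ref{th3}, Corollary \ref{coro5}, and Lemma \ref{lem5}.

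First I would check that $L^2$ is central. Since $L$ is nilpotent with $\dim L^2=(1|0)$, the graded subspace $L^3=[L^2,L,\ldots,L]$ is contained in the one-dimensional $L^2$; were it equal to $L^2$ one would get $L^i=L^2\neq 0$ for all $i\geq 2$, contradicting nilpotency, so $L^3=0$ and hence $L^2\subseteq Z(L)$. In particular $L^2$ is purely even and $p\geq 1$. Next I would produce the splitting: pick a graded complement $W$ of $L^2$ in $Z(L)$, so $\dim W=(p-1|q)$, and a graded complement $V$ of $Z(L)$ in $L$, so $\dim V=(m-p|k-q)$, and set $H=V\oplus L^2$. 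Because $W$ is central, every bracket with a factor from $W$ vanishes, so $W$ is an abelian graded ideal while $H$ is a graded ideal with $H\cap W=0$ and $H+W=L$; thus $L=H\oplus W$ as $n$-Lie superalgebras. The vanishing of the cross brackets gives $H^2=L^2$, so $\dim H=(m-p+1|k-q)$ and $\dim H^2=(1|0)$; one checks moreover that $Z(H)=L^2$, so $H$ is genuinely of Heisenberg type, although for the bound only these dimensions are needed.

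Then I would feed $L=W\oplus H$ into the inequality furnished by Lemma \ref{lem5}, taking $A=W$ and $B=H$. Since $W$ is abelian, $s_0=\dim W_0=p-1$ and $s_1=\dim W_1=q$; since $H^2=L^2$ is one-dimensional and even, $t_0=\dim H_0-1=m-p$ and $t_1=\dim H_1=k-q$, so the cross-term sum of Lemma \ref{lem5} becomes verbatim the double sum in the statement. For the two multiplier summands, Theorem \ref{th3} gives $\dim\mathcal{M}(W)=\sum_{i=0}^{n}\binom{p-1}{i}\mathcal{L}(n-i,q)$ because $W$ is abelian, and Corollary \ref{coro5} applied to $H$ of dimension $(m-p+1|k-q)$ with $\dim H^2=1$ gives $\dim\mathcal{M}(H)\leq\sum_{i=0}^{n}\binom{m-p+1}{i}\mathcal{L}(n-i,k-q)-1$. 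Adding the three contributions reproduces exactly the asserted upper bound, the final $-1$ being the $-\dim H^2$ correction coming from Corollary \ref{coro5}.

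The main obstacle is the structural step: verifying that the complement $W$ of $L^2$ in the centre really splits off as an abelian direct summand and that the residual factor $H$ carries the correct graded dimensions $(m-p+1|k-q)$ with $H^2=L^2$ of type $(1|0)$. It is precisely this parity bookkeeping that forces $t_0=m-p$ and $t_1=k-q$, and hence makes the Lemma \ref{lem5} cross term match the statement; once the splitting is established, the remaining work is the routine substitution of the three assembled pieces.
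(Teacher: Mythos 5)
Your proof is correct and follows essentially the same route as the paper: the identical splitting $L=W\oplus H$ (the paper's $A$ and $B$), with $W$ an abelian graded complement of $L^2$ in $Z(L)$ of dimension $(p-1|q)$ and $H$ of dimension $(m-p+1|k-q)$ satisfying $H^2=L^2$, followed by the same assembly via Lemma \ref{lem5} and Theorem \ref{th3}. The only cosmetic difference is your justification of the final $-1$: you derive it from Corollary \ref{coro5} applied to $H$ (subtracting $\dim H^2=1$), whereas the paper deduces it from the equality case of Theorem \ref{th3} ($H$ is non-abelian, so $\mathcal{M}(H)$ falls at least one short of the abelian maximum) --- the two arguments give the same bound.
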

\begin{proof}
Since $L$ is   nilpotent with $\dim L^2=1$, we have  $L^3 \varsubsetneqq L^2$ and so $L^3=0$. Hence $L^2\subseteq Z(L)$.
So one may choose a complement $A$ of $L^2$ in $Z(L)$ and a complement $B/L^2$ of $Z(L)/L^2$ in $L/L^2$. One may easily see that
$L=B+Z(L)$ and
$L^2=B^2$. On the other hand, if $x\in L$, then $x=b+a+l$, for some $b\in B$, $a\in A$ and $l\in L^2$.
Hence $x\in A+B$. Moreover if $x\in A\cap B$, then $x+L^2\in (Z(L)/L^2) \cap (B/L^2)=L^2$ and thus $x\in A\cap L^2=0$. Hence
 $A\cap B=0$ and $L\cong A\oplus B$.
Also (since $A$ is abelian) we have $\dim A/A^2=\dim A=\dim Z(L)-(1|0)=(p-1|q)$, (since $B^2=L^2$)
 $\dim B/B^2=\dim B/L^2=\dim L-\dim Z(L)=(m-p|k-q)$ and  $\dim B=(m-p+1|k-q)$.
Now one can use  Theorem \ref{th3} and Lemma \ref{lem5}.
Note that the above  ($-1$) in the theorem appears because of this fact that $B$ is not abelian and hence  $\mathcal{M}(B)$ cannot
reach the maximum dimension in Theorem \ref{th3}. This completes the proof.
\end{proof}

\begin{remark}\label{rem7} \normalfont
Here we show that our bound extends all previous bounds on Lie algebras, Lie superalgebras and $n$-Lie algebras.

In \cite{n-r}, Niroomand and Russo proved  that if $L$ is an $m$-dimensional nilpotent Lie algebra with $\dim L^2=1$, then
\begin{equation}\label{eq7}
\dim\mathcal{M}(L)\leq \frac{1}{2}(m-1)(m-2)+1,
\end{equation}
and the equality holds if and only if $L\cong H(1)\oplus A(m-3)$, where $H(1)$ denotes the Heisenberg Lie algebra of dimension 3 and
$A(m-3)$ is an abelian Lie algebra of dimension $m-3$.

 Now  in Theorem \ref{th6}, if $L$ is a Lie algebra (i.e. $n=2$ and $k=q=0$) then by a routine computation, our bound will be equal to
\[\dim\mathcal{M}(L)\leq  \frac{1}{2}(m-1)(m-2)+(m-p)-1.\]
Note that if $L\cong H(1)\oplus A(n-3)$, then $(m-p)=\dim L-\dim Z(L)=2$, which yields bound (\ref{eq7}).

Also, Nayak \cite{n1} recently generalized bound (\ref{eq7}) for Lie superalgebras. She proves that if $L$ is
a nilpotent Lie superalgebra of dimension $(m|k)$ and $\dim L^2=(1|0)$, then
\[\dim\mathcal{M}(L)\leq \frac{1}{2}(m+k-1)(m+k-2)+k+1,\]
and the equality occurs if and only if $L\cong H(1,0)\oplus A(m-3,k)$, where $H(1,0)$ is the special Heisenberg Lie superalgebra of dimension
$(3|0)$ and $A(m-3,k)$ is an abelian Lie superalgebra of dimension $m+k-3$.

In Theorem \ref{th6}, if $L$ is a Lie superalgebra (i.e. $n=2$), then after a tedious  computation, our bound turns into
\[\dim\mathcal{M}(L)\leq \frac{1}{2}(m+k-1)(m+k-2)+k+(m+k-p-q)-1,\]
and in  case $L\cong H(1,0)\oplus A(m-3,k)$, we have again $(m+k-p-q)=\dim L-\dim Z(L)=2$, which gives the Nayak's bound.

Finally,  Eshrati et al.  \cite{esh} showed that if $L$ is a nilpotent $n$-Lie algebra of dimension $m$ with $\dim L^2=1$, then
\begin{equation}\label{eq8}
\dim\mathcal{M}(L)\leq {m-1\choose n}+n-1,
\end{equation}
and the equality holds if and only if $L\cong H(n,1)\oplus A(m-n-1)$, where $H(n,1)$ is the special Heisenberg $n$-Lie algebra of dimension
$n+1$ and $A(m-n-1)$ is an abelian $n$-Lie algebra of dimension of $m-n-1$.

Similar to above cases,  in Theorem \ref{th6}, if $L$ is an $n$-Lie algebra (i.e. $k=q=0$), then one may check that our bound is
equal to
\[\dim\mathcal{M}(L)\leq {m-1\choose n}+{m-p\choose n-1}-1,\]
and if $L\cong H(n,1)\oplus A(m-n-1)$, then $m-p=dim L-\dim Z(L)=n$, which yields bound (\ref{eq8}).

Note that the above bounds obtained in \cite{n-r,n1,esh}  are also valid when $\dim L^2=r\geq 1$. In fact,
 their general cases are decreasing functions of $\dim L^2$.
\end{remark}

\begin{problem}\label{prob8} \normalfont
As it is discussed in the above remark, in cases of Lie algebras, Lie superalgebras and $n$-Lie algebras, using the notion
of Heisenberg algebras, one could reach the maximum dimension of $\mathcal{M}(L)$ when $\dim L^2=1$.
R. Bai and Meng \cite{b-m} introduced methods to construct Heisenberg $n$-Lie algebras which are actually
one-dimensional central extensions of  abelian $n$-Lie algebras. Moreover, W. Bai and Liu \cite{b-l} studied the cohomology of Heisenberg Lie superalgebras. A Heisenberg superalgebra can be considered  as a subalgebra of Lie superalgebras
of (odd) contact vector fields, which is precisely the negative part with respect
to a certain natural grading.

Now, how should we define the {\it Heisenberg $n$-Lie superalgebra}
(specially when $n$ is odd and considering the Definition \ref{def0} (i)) to reach the maximum dimension in
Corollary \ref{coro5} and then in Theorem \ref{th6} ?
\end{problem}


In what follows, we provide some inequalities on the dimension of the Schur multiplier of a nilpotent $n$-Lie superalgebra.

\begin{proposition}\label{prop9}
Let $L$ be a finite dimensional nilpotent $n$-Lie superalgebra of class $c$. Then for every $i\geq 2$
\[\dim\mathcal{M}(L)\leq\dim\mathcal{M}(L/L^i)+ (\dim L^c)  \Big{(}\sum_{i=0}^{n-1}{s\choose i}t^{n-1-i}-1\Big{)},\]
where $\dim(L/L^2)=(s|t)$.
\end{proposition}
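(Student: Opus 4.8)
The plan is to fix the top central term of the lower central series and reduce the whole statement to one dimension estimate. First I would clear the trivial range: for $i>c$ one has $L^i=0$, so $L/L^i=L$ and the inequality holds because its right-hand correction term is non-negative; hence assume $2\le i\le c$, where $L^{c+1}=0$ forces $L^c\subseteq Z(L)$ and $L^c\subseteq L^i\subseteq L^2$. Choosing a free presentation $0\to R\to F\to L\to0$ and an ideal $S$ of $F$ with $S/R\cong L^i$, I would apply Corollary \ref{coro27}(iii) with $N=L^i$. Since $N\cap L^2=L^i$ and $[N,L,\ldots,L]=L^{i+1}$, this gives the exact identity
\[
\dim\mathcal{M}(L)=\dim\mathcal{M}(L/L^i)+\dim A-\dim(L^i/L^{i+1}),\qquad A=\frac{R\cap[S,F,\ldots,F]}{[R,F,\ldots,F]}.
\]
Thus the Proposition is equivalent to the single inequality $\dim A\le\dim(L^i/L^{i+1})+(\dim L^c)(\Sigma-1)$, where I abbreviate $\Sigma=\sum_{j=0}^{n-1}\binom{s}{j}t^{\,n-1-j}$ (the inner sum, with its dummy index renamed to $j$).

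The shape of the bound on $A$ would be read off from two features. The quotient $[S,F,\ldots,F]/[R,F,\ldots,F]$ is spanned by classes of brackets $[\tilde z,\tilde u_1,\ldots,\tilde u_{n-1}]$ with $z\in L^i$ and $u_\ell\in L$; applying $F\to L$ sends it onto $L^{i+1}$ with kernel exactly $A$, so $A$ is the image of a connecting map out of $L^i\otimes^{\,n-1}(L/L^2)$. Graded antisymmetry in the last $n-1$ slots (Definition \ref{def0}(ii)) makes this map factor through the graded-antisymmetric tensors — even entries contributing a factor $\binom{s}{j}$ and odd entries at most $t^{\,n-1-j}$ — which is exactly the count producing $\Sigma$ in Theorem \ref{th1} and Lemma \ref{lem5}. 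The reduction of the coefficient from $\dim L^i$ to $\dim L^c$ and the extra $-1$ are transparent in the extreme case $i=c$: there $L^c$ is central, so Corollary \ref{coro27}(vii) with $N=L^c$ gives $\dim\mathcal{M}(L)\le\dim\mathcal{M}(L/L^c)+\dim(L^c\otimes^{\,n-1}(L/L^2))-\dim(L^c\cap L^2)$, and antisymmetry together with $L^c\cap L^2=L^c$ collapses the right-hand side to $\dim\mathcal{M}(L/L^c)+(\dim L^c)(\Sigma-1)$.

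The hard part is to carry this coefficient $\dim L^c$ through for all $2\le i<c$, and this is where I expect the real obstacle. The naive shortcuts are all lossy: one is tempted to combine the $i=c$ estimate with a monotonicity statement $\dim\mathcal{M}(L/L^c)\le\dim\mathcal{M}(L/L^i)$, but this is false (an abelianised quotient typically has a larger multiplier), and peeling the lower central series one layer at a time only yields the weaker coefficient $\dim L^i$, since the per-layer connecting images $A^{(j)}$ need not be bounded by $\dim(L^j/L^{j+1})$ for the non-top layers. A small free-nilpotent example shows that equality in the Proposition is attained only \emph{globally}, the excess of each non-top layer being compensated by the top one. Consequently the decisive step is a global estimate of $\dim A$ — equivalently, controlling the total image of the connecting maps along the whole series — showing that only the top term $L^c$ carries the full antisymmetric multiplicity $\Sigma$, while every lower layer contributes no more than the defect $\dim(L^i/L^{i+1})$ already subtracted. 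Once that count is in hand the Proposition follows, and as a consistency check the specialisations $t=0$ and $n=2$ should reproduce the $n$-Lie-algebra and Lie-superalgebra bounds recorded in Remark \ref{rem7}.
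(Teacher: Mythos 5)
Your write-up is not a proof: after the (correct) reformulation through Corollary \ref{coro27}(iii), which turns the Proposition into the estimate $\dim A\le\dim(L^i/L^{i+1})+(\dim L^c)(\Sigma-1)$ with $\Sigma=\sum_{j=0}^{n-1}\binom{s}{j}t^{\,n-1-j}$, and the (correct) one-shot treatment of $i=c$ via Corollary \ref{coro27}(vii) refined by the bracket count of Lemma \ref{lem5}, you explicitly defer the whole range $2\le i<c$ to a ``global estimate'' that you never supply; ``once that count is in hand'' is where a proof would have to begin. For comparison, the paper does not analyse the connecting maps layer by layer at all: it inducts on $\dim L$ (with base cases $(1|0)$ and $(0|1)$, the latter by the parity argument forcing $[x,\ldots,x]=0$), peeling a \emph{one-dimensional} graded ideal $N\subseteq L^c\subseteq Z(L)\cap L^2$. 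Each peel costs $\Sigma-1$ by Corollary \ref{coro27}(vii) and the Lemma \ref{lem5} technique (the $-1$ coming from $\dim(N\cap L^2)=1$), the invariant $(s|t)$ is unchanged since $N\subseteq L^2$, and the induction hypothesis is applied to $L/N$ with coefficient $\dim(L^c/N)=\dim L^c-1$, so the peeling happens only inside the top term and accrues exactly $(\dim L^c)(\Sigma-1)$ — this is the mechanism your proposal dismisses when you assert that peeling can only yield the coefficient $\dim L^i$.

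That said, your diagnosis of the obstacle is vindicated, because the paper's induction itself is flawed in the range you flagged: it tacitly assumes $L/N$ still has class $c$ with top term $L^c/N$. When $\dim L^c=1$ (forcing $N=L^c$) the class drops, the inductive correction is $\dim\big((L/N)^{c-1}\big)(\Sigma-1)$ rather than $0$, and the displayed chain in the paper is then exactly the false monotonicity $\dim\mathcal{M}(L/L^c)\le\dim\mathcal{M}(L/L^i)$ you warned against. Indeed the statement fails for $i<c$: take Example \ref{ex12} with $n=2$, $m=5$, so $c=4$, $\dim L^4=1$, $(s|t)=(2|0)$ and $\Sigma-1=1$; the right-hand side for $i=2$ is $\dim\mathcal{M}(L/L^2)+1=2$, while a direct cocycle computation (the independent classes of $x^1\wedge x^5$, $x^2\wedge x^3$ and $x^2\wedge x^5-x^3\wedge x^4$) gives $\dim\mathcal{M}(L)=3$, consistent with Theorem \ref{th12}. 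So the inequality holds with equality at $i=c$ but fails at $i=2$ for this algebra; the ``decisive step'' you could not find does not exist in the asserted generality. What survives is precisely what you proved ($i>c$ and $i=c$), together with the weaker bound with coefficient $\dim L^i$ in place of $\dim L^c$, which the paper's peeling argument (peeling lines successively down to $L^i$, each at cost $\Sigma-1$) does establish for all $2\le i\le c$.
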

\begin{proof}
Clearly if either $c=1$ or $i>c$, then the result follows. So assume that $2\leq i\leq c$ and use induction on $\dim L$.
If $\dim L=(1|0)$, then $L$ is abelian. Let $\dim L=(0|1)$ and $\{x\}\subseteq L_1$ be a basis for $L$. Then if $n$ is even, by
Definition \ref{def0} (i) we have  $[x,\ldots,x]=0$ and hence $L$ is abelian. If $n$ is odd and $[x,\ldots,x]=x$, then since
$n-1$ is even and $|x_1|+\cdots+|x_{n-1}|=(n-1)|x|=0$ modulo 2,  the graded
Filippov-Jacobi identity implies that $x=nx$ and $x=0$, which is impossible. Hence in this case, $[x,\ldots,x]=0$ and $L$ is abelian as well, and the inequality holds.
Now suppose that $\dim L>1$ and the result holds for any $n$-Lie superalgebra of dimension less
than $\dim L$. Choose a one-dimensional graded ideal $N$ of $L$ contained in $L^c\subseteq Z(L)$.
By Corollary \ref{coro27} (vii) and the technique applied in Lemma \ref{lem5}, we get
\[\dim\mathcal{M}(L)  \leq \dim\mathcal{M}(L/N)  +\sum_{i=0}^{n-1}{s\choose i}t^{n-1-i}-1.\]
Put $\lambda=\sum_{i=0}^{n-1}{s\choose i}t^{n-1-i}-1$. Since $\dim\frac{L/N}{(L/N)^2}=\dim L/L^2$, using induction hypothesis we have
\begin{eqnarray*}
\dim\mathcal{M}(L)&\leq& \dim\mathcal{M}\big{(}\frac{L/N}{L^i/N}\big{)}+\dim (L^c/N)
\lambda+\lambda\\
&=&\dim\mathcal{M}(L/L^i)+(\dim L^c -1)\lambda+\lambda\\
&=&\dim\mathcal{M}(L/L^i)+(\dim L^c) \lambda.\\
\end{eqnarray*}
\end{proof}

\begin{proposition}\label{prop10}
Let $L$ be a finite dimensional nilpotent $n$-Lie superalgebra of class $c\geq 2$. Then for every $2\leq i\leq c$
\[\dim\mathcal{M}(L)+\dim\mathcal{M}(L/L^i)\leq \big{(}\dim L -1 \big{)}\sum_{i=0}^{n-1}{s\choose i}t^{n-1-i},\]
 where $\dim(L/L^2)=(s|t)$.
\end{proposition}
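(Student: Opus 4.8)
The plan is to prove the inequality by induction on $\dim L$, running it uniformly over all admissible indices $i$ and exploiting the same one-step reduction used in the proof of Proposition~\ref{prop9}. Write $S=\sum_{j=0}^{n-1}\binom{s}{j}t^{\,n-1-j}$, so that the target reads $\dim\mathcal{M}(L)+\dim\mathcal{M}(L/L^i)\le(\dim L-1)S$. The basic tool I would record first is the inequality extracted, exactly as in Proposition~\ref{prop9}, from Corollary~\ref{coro27}~(vii) together with the counting technique of Lemma~\ref{lem5}: for every one-dimensional central graded ideal $N\subseteq L^2$,
\[
\dim\mathcal{M}(L)\le\dim\mathcal{M}(L/N)+S-1,
\]
since $N\cap L^2=N$ is one-dimensional while $\dim\big(N\otimes^{n-1}(L/L^2)\big)\le S$. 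I would also note that if $N\subseteq L^i$ then $(L/N)^i=L^i/N$, whence $L/L^i\cong (L/N)/(L/N)^i$, and that $N\subseteq L^2$ gives $\dim\big((L/N)/(L/N)^2\big)=\dim(L/L^2)=(s|t)$; thus the invariant $S$ is preserved under passing from $L$ to $L/N$.

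I would then treat the generic case $\dim L^i\ge 2$. Choosing a one-dimensional central graded ideal $N\subseteq L^c$, the quotient $L/N$ still has class at least $i$ (because $L^i\not\subseteq N$), so the induction hypothesis applies to $L/N$ with the same index $i$. Combining the displayed inequality with the induction hypothesis $\dim\mathcal{M}(L/N)+\dim\mathcal{M}\big((L/N)/(L/N)^i\big)\le(\dim L-2)S$ and the isomorphism $L/L^i\cong(L/N)/(L/N)^i$ gives $\dim\mathcal{M}(L)+\dim\mathcal{M}(L/L^i)\le(\dim L-1)S-1$, which is even slightly stronger than required.

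The step I expect to be the crux is the exceptional case $\dim L^i=1$; since the lower central series strictly decreases while nonzero, this forces $i=c$ and $\dim L^c=1$, so that quotienting by the only available central line $N=L^c$ drops the class to $c-1<i$ and the naive reduction breaks down. For $c\ge 3$ I would instead apply Proposition~\ref{prop9} to $L$ with index $c-1$, obtaining $\dim\mathcal{M}(L)\le\dim\mathcal{M}(L/L^{c-1})+S-1$ (here $\dim L^c=1$ makes the coefficient equal to $S-1$), and apply the induction hypothesis to $L/N=L/L^c$ with index $c-1$, obtaining $\dim\mathcal{M}(L/L^c)+\dim\mathcal{M}(L/L^{c-1})\le(\dim L-2)S$ after identifying $(L/L^c)/(L/L^c)^{c-1}\cong L/L^{c-1}$. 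Adding these, the common term $\dim\mathcal{M}(L/L^{c-1})$ cancels and leaves exactly $(\dim L-1)S-1$; the cancellation is what makes this case work, and spotting it is the main difficulty.

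Finally, the base case $c=2$ (equivalently $\dim L^2=1$) must be settled by a direct estimate rather than by reduction, since removing $N=L^2$ makes the quotient abelian. Here $L/L^2$ is abelian of dimension $(s|t)$, so $\dim\mathcal{M}(L/L^2)$ is computed explicitly by Theorem~\ref{th3}, while the displayed inequality gives $\dim\mathcal{M}(L)\le\dim\mathcal{M}(L/L^2)+S-1$; it then remains to verify the purely numerical bound $2\dim\mathcal{M}(L/L^2)\le(\dim L-2)S+1$, comparing the abelian-multiplier formula of Theorem~\ref{th3} with $S$ and $\dim L=s+t+1$ (when $\dim L^2=(1|0)$ one may alternatively feed in Theorem~\ref{th6}). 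This comparison is routine, but the bookkeeping with the $\mathbb{Z}_2$-grading is the only genuinely computational part of the argument; everything else is the cancellation described above.
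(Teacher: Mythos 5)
Your generic reduction (pick a one\mbox{-}dimensional central graded ideal $N\subseteq L^i$ with $N\neq L^i$, apply Corollary~\ref{coro27}~(vii) with the counting of Lemma~\ref{lem5}, then induct on $\dim L$ for $L/N$ with the same index $i$) is precisely the paper's \emph{entire} proof of Proposition~\ref{prop10}; the paper even uses the weaker estimate $\dim\mathcal{M}(L)\le\dim\mathcal{M}(L/N)+S$ and never asks whether the induction hypothesis is applicable to $L/N$, i.e.\ whether $L/N$ still has class at least $\max(2,i)$. So on the case $\dim L^i\ge 2$ you and the paper coincide, and your treatment of the case $i=c$, $\dim L^c=1$, $c\ge 3$ --- playing Proposition~\ref{prop9} at index $c-1$ against the induction hypothesis for $L/L^c$ at index $c-1$ so that the common term $\dim\mathcal{M}(L/L^{c-1})$ cancels --- is correct and in fact repairs a step the paper silently skips.

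The genuine gap is your base case $c=2$, $\dim L^2=1$, which is not ``routine'': the numerical inequality you reduce it to, $2\dim\mathcal{M}(L/L^2)\le(\dim L-2)S+1$, is \emph{false} whenever $t\ge 1$. Indeed for $n=2$, with $L/L^2$ abelian of dimension $(s|t)$, Theorem~\ref{th3} gives $\dim\mathcal{M}(L/L^2)=\frac{1}{2}\big((s+t)^2+(t-s)\big)$, while $S=s+t$ and $\dim L=s+t+1$, so the left side exceeds the right side by exactly $2t-1$. Worse, no amount of bookkeeping (nor feeding in Theorem~\ref{th6}) can close this case, because the proposition itself fails there: take $n=2$ and $L=H(1,0)\oplus A(0,k)$ with $k\ge 2$, so that $\dim L=(3|k)$, $c=i=2$, $\dim L^2=(1|0)$ and $(s|t)=(2|k)$. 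By the equality case quoted in Remark~\ref{rem7}, $\dim\mathcal{M}(L)=\frac{1}{2}(k+2)(k+1)+k+1$, and by Theorem~\ref{th3}, $\dim\mathcal{M}(L/L^2)={k\choose 2}+3k+1$; their sum is $k^2+5k+3$, whereas $(\dim L-1)S=(k+2)^2=k^2+4k+4$, so the asserted bound is violated for all $k\ge 2$ (and one checks the bound of Theorem~\ref{th6} is attained with equality by this $L$, so it gives nothing sharper). When $t=0$ your verification does go through, since $2{s\choose n}\le(s-1){s\choose n-1}+1$ holds for all $n\ge 2$, so your scheme correctly proves the statement for $n$-Lie algebras. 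In short: your case analysis is sharper than the paper's argument, but the ``only genuinely computational part'' you deferred is exactly where the induction has no valid base in the super case, and what it exposes is a counterexample to the proposition (and hence a gap in the paper's own proof, whose inductive step tacitly assumes $L/N$ is non-abelian of class at least $i$), not a fixable omission in your write-up.
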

\begin{proof}
Note that since $c\geq 2$, $L$ is not abelian and so $\dim L>1$. Choose a one-dimensional graded ideal $N$ of $L$ contained
in $L^i\cap Z(L)$. Put $\mu=\sum_{i=0}^{n-1}{s\choose i}t^{n-1-i}$. By the above proposition and induction on $\dim L$, we get
\begin{eqnarray*}
\dim\mathcal{M}(L)  &\leq& \dim\mathcal{M}(L/N) + \mu\\
&\leq& (\dim (L/N) -1)\mu-\dim\mathcal{M}(L/L^i) + \mu\\
&=& (\dim L -2)\mu-\dim\mathcal{M}(L/L^i) + \mu\\
&=& (\dim L -1)\mu-\dim\mathcal{M}(L/L^i).\\
\end{eqnarray*}
\end{proof}

Next, we give a result on the dimension of the Schur multiplier of  $n$-Lie algebras, i.e. $\dim L=(m|0)$.

\begin{definition}\label{def11} \normalfont (\cite{goz}).
An $m$-dimensional  $n$-Lie algebra $L$ ($m\geq n$) is called
 {\it filiform} if $\dim L^i=m-n-i+2$, for every $i\geq 2$.
\end{definition}

Clearly a filiform  $n$-Lie algebra
 is  nilpotent of class  $m-n+1$. Since this nilpotency class is  maximal,  such $n$-Lie algebras can be called  nilpotent
of maximal class.

\begin{example}\label{ex12} \normalfont
Consider the $n$-Lie algebra $L$ with a basis $\{x_1,\ldots,x_m\}$ ($m>n$)
and non-zero multiplications $[x_1,\ldots,x_{n-1},x_i]=x_{i+1}$, for $n\leq i\leq m-1$.
Now, since
\[[\underbrace{x_1,\ldots,x_{n-1},\ldots,[x_1,\ldots,x_{n-1},[x_1,\ldots,x_{n-1}}_{(m-n+1)-times\ x_1,\ldots,x_{n-1}},x_n]]]=[x_1,\ldots,x_{n-1},x_m]=0 \]
and
\[[\underbrace{x_1,\ldots,x_{n-1},\ldots,[x_1,\ldots,x_{n-1},[x_1,\ldots,x_{n-1}}_{(m-n)-times},x_n]]]=[x_1,\ldots,x_{n-1},x_{m-1}]=x_m\not=0,\]
we have $L^{m-n+2}=0$ and $L^{m-n+1}\not=0$.  Therefore,  $L$ is a filiform $n$-Lie algebra.
\end{example}

\begin{theorem}\label{th12}
Let $L$ be a finite dimensional  filiform $n$-Lie algebra. Then
\[\dim\mathcal{M}(L)=\dim\mathcal{M}(L/Z(L))+t,\]
where $-1\leq t\leq n-1$.
\end{theorem}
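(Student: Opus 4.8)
The plan is to apply the exact sequence of Proposition \ref{prop26}(iii) to the central ideal $N=Z(L)$, after first pinning down the center of a filiform $n$-Lie algebra. Indeed, I would show that for a filiform $L$ of dimension $m>n$ one has $Z(L)=L^{c}$, where $c=m-n+1$ is the nilpotency class, so that $\dim Z(L)=1$. Since $\dim L^{c}=1$ by Definition \ref{def11} and $L^{c}\subseteq Z(L)$ (because $[L^{c},L,\ldots,L]=L^{c+1}=0$), it suffices to prove $Z(L)\subseteq L^{c}$, which I would do in two steps. For $Z(L)\subseteq L^{2}$: note $\dim(L/L^{2})=n$, so $L$ is minimally generated by $n$ elements; were there a central $z\notin L^{2}$, completing $\bar z$ to a generating set $g_{1}=z,g_{2},\ldots,g_{n}$ would force the only (up to sign) top bracket $[z,g_{2},\ldots,g_{n}]$ to vanish, whence $L^{2}=L^{3}$ and $L^{2}=0$ by nilpotency, contradicting $\dim L^{2}=m-n\geq 1$. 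For the refinement, take $z\in Z(L)\setminus\{0\}$ and let $j$ be maximal with $z\in L^{j}\setminus L^{j+1}$ (so $2\leq j\leq c$); the filiform condition gives $\dim(L^{j}/L^{j+1})=1$, and if $j<c$ then $\bar z$ spans $L^{j}/L^{j+1}$, so centrality yields $L^{j+1}=[L^{j},L,\ldots,L]=[L^{j+1},L,\ldots,L]=L^{j+2}$, contradicting the strict decrease of the lower central series. Hence $j=c$ and $Z(L)=L^{c}$.

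Next, with $N=Z(L)\subseteq L^{2}$ (so $N\cap L^{2}=Z(L)$ and $\dim(N\cap L^{2})=1$), I would feed this central ideal into the exact sequence of Proposition \ref{prop26}(iii),
\[
Z(L)\otimes^{n-1}\frac{L}{L^{2}}\stackrel{\phi}{\longrightarrow}\mathcal{M}(L)\stackrel{\psi}{\longrightarrow}\mathcal{M}(L/Z(L))\stackrel{\rho}{\longrightarrow}Z(L)\cap L^{2}\longrightarrow 0.
\]
Exactness at $\mathcal{M}(L/Z(L))$ together with the surjectivity of $\rho$ gives $\dim\Ima\psi=\dim\mathcal{M}(L/Z(L))-1$, while exactness at $\mathcal{M}(L)$ gives $\dim\mathcal{M}(L)=\dim\Ima\phi+\dim\Ima\psi$. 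Combining these, $\dim\mathcal{M}(L)=\dim\mathcal{M}(L/Z(L))+(\dim\Ima\phi-1)$, so the quantity $t$ in the statement is exactly $\dim\Ima\phi-1$, and it remains only to bound $\dim\Ima\phi$.

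Finally I would establish $0\leq\dim\Ima\phi\leq n$. The lower bound is immediate. For the upper bound, recall that in such transgression sequences $\phi$ sends $z\otimes\bar\ell_{1}\otimes\cdots\otimes\bar\ell_{n-1}$ to the class of $[\tilde z,\tilde\ell_{1},\ldots,\tilde\ell_{n-1}]$ for lifts in a free presentation; taking $z$ to span the one-dimensional $Z(L)$ and letting the $\bar\ell_{i}$ range over a basis $\bar x_{1},\ldots,\bar x_{n}$ of $L/L^{2}$, the antisymmetry of the bracket kills every repeated index and leaves $\Ima\phi$ spanned by the ${n\choose n-1}=n$ classes $[\tilde z,\tilde x_{i_{1}},\ldots,\tilde x_{i_{n-1}}]$ with $i_{1}<\cdots<i_{n-1}$. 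Hence $\dim\Ima\phi\leq n$, giving $-1\leq t\leq n-1$. I expect the two structural claims to be the main obstacle: identifying $Z(L)=L^{c}$ as one-dimensional, and, above all, controlling $\dim\Ima\phi$, which hinges on making the connecting map $\phi$ explicit and exploiting antisymmetry rather than the far too large dimension $n^{\,n-1}$ of its domain.
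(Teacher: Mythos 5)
Your proposal is correct and follows essentially the same route as the paper: both arguments rest on showing that $Z(L)=L^{m-n+1}$ is one-dimensional and contained in $L^2$, and on bounding the image of the bracket map out of $Z(L)\otimes^{n-1}L/L^2$ by ${n\choose n-1}=n$ via antisymmetry, your use of the exact sequence of Proposition \ref{prop26}(iii) being equivalent to the paper's combination of Corollary \ref{coro27}(ii) and (iv) with its explicit epimorphism $\varphi$, since $\Ima\phi$ is precisely $[S,F,\ldots,F]/[R,F,\ldots,F]$. If anything, your two-step verification that $Z(L)=L^{c}$ (first $Z(L)\subseteq L^2$ via the generating-set argument, then the maximal-$j$ argument) is more detailed than the paper's terse assertion that $L^{m-n+1}\subseteq Z(L)\subsetneqq L^{m-n}$, and your single equality $t=\dim\Ima\phi-1$ cleanly packages the paper's two separate inequalities.
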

\begin{proof}
Put $\dim L=m$. If $m=n$, then the result follows. Suppose that $m>n$, then $L$ is not abelian. Since $L$ is filiform, we have $L^{m-n+1}\subseteq Z(L)$ and $\dim L^{m-n+1}=1$. Let $0\not=x=[y,l_1,\ldots,l_{n-1}]\in \dim L^{m-n+1}$, for
some $y\in L^{m-n}$ and $l_i\in L$. Then $y\not\in Z(L)$ and hence $L^{m-n+1}\subseteq Z(L)\subsetneqq L^{m-n}$.
Therefore, $Z(L)=L^{m-n+1}$ as $L$ is filiform, and so $\dim (Z(L)\cap L^2)=1$. Also $\dim (L/L^2)=n$. Now, by
Corollary \ref{coro27} (iv) we have
\[\dim\mathcal{M}(L)  = \dim\mathcal{M}(L/Z(L)) +
\dim\frac{[S,F,\ldots,F]}{[R,F,\ldots,F]}-1,\]
where  $0\rightarrow R \rightarrow F \stackrel{\pi}{\rightarrow} L\rightarrow 0$ is a free presentation of $L$ and $Z(L)\cong S/R$.
Similar to Lemma \ref{lem5},
$\varphi:Z(L) \times \frac{L}{L^2}\times\cdots\times \frac{L}{L^2}\longrightarrow \frac{[S,F,\ldots,F]}{[R,F,\ldots,F]}$
given by $\varphi(z,\bar{l}_1,\ldots,\bar{l}_{n-1})= [s,f_1,\ldots,f_{n-1}]+[R,F,\ldots,F]$, where
$\pi(s)=z$ and $\pi(f_i)=l_i$, is a well-defined epimorphism. Then
$\dim Z(L)=1$ and $\dim (L/L^2)=n$ imply that $\dim (\Ima\varphi)\leq {n\choose n-1}=n$. Therefore,
\[\dim\mathcal{M}(L)\leq \dim\mathcal{M}(L/Z(L))+n-1.\]
On the other hand, by Corollary \ref{coro27} (ii)
\[\dim\mathcal{M}(L/Z(L))\leq \dim\mathcal{M}(L)+1.\]
Combining two inequalities gives the result.
\end{proof}

Finally, we discuss a result concerning the converse of Schur's theorem in the context of $n$-Lie superalgebras.
 In \cite{s-v}, it is shown that if  $L$ is an $n$-Lie algebra with finite dimensional derived algebra and finitely generated central factor, then
$\dim (L/Z(L))\leq {k\choose n-1}\dim L^2$,
where $k=d(L/Z(L))$ is the minimal number of generators of $L/Z(L)$.
This is actually an $n$-Lie algebra analogue of the group case given in \cite{n2}.

In the following theorem, we prove it for $n$-Lie superalgebras which clearly extends the above result.
Recall from \cite{sun} that for $x_1,\ldots,x_{n-1}\in L$, the map $ad(x_1,\ldots,x_{n-1}):L\to L$ given by
$ad(x_1,\ldots,x_{n-1})(x)=[x_1,\ldots,x_{n-1},x]$ for all $x\in L$, is a derivation which is called an inner derivation of the
$n$-Lie superalgebra $L$.
\begin{theorem}\label{th13}
Let $L=L_0\oplus L_1$ be an $n$-Lie superalgebra whose derived superalgebra is finite dimensional and $L/Z(L)$ is finitely generated. Also, let
$$\{x_1+Z(L),\ldots,x_s +Z(L),y_1+Z(L),\ldots,y_t+Z(L)\}$$
be a generating set of $L/Z(L)$ such that $x_i\in L_0$ for $1\leq i\leq s$ and $y_j\in L_1$ for $1\leq j\leq t$. Then
\[\dim\big{(}\frac{L}{Z(L)}\big{)}\leq   \sum_{i=0}^{n-1}{s\choose i}t^{n-1-i} \dim L^2.\]
\end{theorem}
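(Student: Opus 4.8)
The plan is to embed $L/Z(L)$ into a direct sum of finitely many copies of $L^2$ by means of the inner derivations of $L$, and then to count how many copies are needed. Write $\bar{L}=L/Z(L)$ and fix lifts $x_1,\ldots,x_s\in L_0$ and $y_1,\ldots,y_t\in L_1$ of the given generators. Since the bracket is $n$-linear and vanishes whenever one of its entries lies in $Z(L)$, the element $[x,g_1,\ldots,g_{n-1}]\in L^2$ depends only on $x+Z(L)$; moreover, by the graded antisymmetry of Definition \ref{def0}(ii) it is alternating in the even entries (so a repeated even generator kills it) and symmetric in the odd entries (so odd generators may repeat). Hence, after reordering with signs, every $(n-1)$-tuple of generators is, up to a nonzero scalar, one in which $i$ distinct even generators precede $n-1-i$ odd generators; let $T$ be a set of representatives of such tuples. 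For each fixed $i$ there are at most ${s\choose i}$ choices of the even part and at most $t^{\,n-1-i}$ choices of the (ordered) odd part, so $|T|\le\sum_{i=0}^{n-1}{s\choose i}t^{\,n-1-i}$.

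Next I would introduce the linear map
\[
\theta:\bar{L}\longrightarrow\bigoplus_{\tau\in T}L^2,\qquad \theta(x+Z(L))=\big([x,\tau]\big)_{\tau\in T},
\]
where $[x,\tau]$ is the bracket of $x$ against the tuple $\tau$. By the previous remarks $\theta$ is well defined, and it suffices to prove $\ker\theta=Z(L)$, i.e. that $\theta$ is injective. One inclusion is immediate. For the other, suppose $[x,g_1,\ldots,g_{n-1}]=0$ for every tuple $(g_1,\ldots,g_{n-1})$ of generators; I must upgrade this to $[x,u_1,\ldots,u_{n-1}]=0$ for all $u_j\in L$, which is exactly the statement $x\in Z(L)$.

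The heart of the argument, and the step I expect to be the main obstacle, is precisely this upgrade. Since the chosen elements generate $L/Z(L)$ as an $n$-Lie superalgebra, each $u_j$ is a linear combination of iterated brackets of generators together with a central element; central entries annihilate the bracket, so by $n$-linearity it is enough to treat the case where every $u_j$ is an iterated bracket. I would argue by induction on the total bracket-depth of the tuple $(u_1,\ldots,u_{n-1})$, the base case being the hypothesis. If, say, $u_1=[v_1,\ldots,v_n]$, then after moving this entry to the last slot at the cost of a sign, the graded Filippov--Jacobi identity of Definition \ref{def0}(iii) rewrites $[x,u_2,\ldots,u_{n-1},[v_1,\ldots,v_n]]$ as a signed sum of terms, each containing an inner bracket $[x,u_2,\ldots,u_{n-1},v_i]$; every such tuple $(u_2,\ldots,u_{n-1},v_i)$ has strictly smaller total depth, so by the induction hypothesis all these terms vanish. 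Keeping careful track of the signs produced by the identity is the only delicate point. This establishes injectivity, and then
\[
\dim\!\big(L/Z(L)\big)=\dim(\Ima\theta)\le |T|\cdot\dim L^2\le\Big(\sum_{i=0}^{n-1}{s\choose i}t^{\,n-1-i}\Big)\dim L^2,
\]
as required; the finite dimensionality of $L^2$ guarantees that the right-hand side is finite.
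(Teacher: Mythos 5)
Your proposal is correct and takes essentially the same route as the paper: the paper defines exactly your map (there called $\psi$, built from the inner derivations $ad(u_1,\ldots,u_i,v_1,\ldots,v_{n-1-i})$ into $\sum_{i=0}^{n-1}{s\choose i}t^{n-1-i}$ copies of $L^2$) and concludes by asserting that $\psi$ is ``clearly'' injective. Your depth induction via the graded Filippov--Jacobi identity, upgrading vanishing on generator tuples to vanishing on all of $L$, is a sound filling-in of precisely the step the paper leaves implicit (and the sign bookkeeping you worry about is harmless, since every term vanishes outright by the induction hypothesis).
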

\begin{proof}
Let $A=\{x_1,\ldots,x_s\}$ and $B=\{y_1,\ldots,y_t\}$.
 Put  $ad_{u_{i} v_{i}}=ad(u_1,\ldots,u_i,v_1,\ldots,v_{n-1-i})$ such that $u_1,\ldots,u_i$ are $i$ distinct elements of $A$, and $v_1,\ldots,v_{n-1-i}$
 are $n-1-i$ (not necessarily distinct) elements of $B$. Now, define
 \begin{equation}
\begin{array}{rcl}
\psi:\dfrac{L}{Z(L)}&\longrightarrow &  L^2\oplus\cdots\oplus L^2\\ \\ \nonumber
x+Z(L)&\longmapsto & \big{(}\ldots,ad_{u_{i} v_{i}}(x),\ldots \big{)},
\end{array}
\end{equation}
whose codomain is including  $\sum_{i=0}^{n-1}{s\choose i}t^{n-1-i}$ terms of $L^2$.
Clearly $\psi$ is an injective linear map, and this completes the proof.
\end{proof}


Hesam Safa \\
Department of Mathematics, Faculty of Basic Sciences, University of Bojnord, Bojnord, Iran.\\
E-mail address: hesam.safa@gmail.com, \ \ \  h.safa@ub.ac.ir \\

\end{document}